\newtheorem{theorem}{Theorem}
\theoremstyle{plain}
\newtheorem*{assumption}{Assumptions}
\newtheorem{lemma}{Lemma}
\newtheorem{proposition}{Proposition}
\theoremstyle{definition}
\newtheorem{definition}{Definition}
\newtheorem{remark}{Remark}
\theoremstyle{remark}
\numberwithin{equation}{section}
\title{Stability of the Ricci Yang-Mills flow at Einstein Yang-Mills metrics}
\author{Andrea Young}
\subjclass[2000]{53C44, 53C21, 53C07, 58J35}
\keywords{Ricci Yang-Mills flow, asymptotic stability, Einstein Yang-Mills metrics}
\begin{document}

\begin{abstract}
Let $P$ be a principal $U(1)$-bundle over a closed manifold $M$.   On $P$, one can define a modified version of the Ricci flow called the Ricci Yang-Mills flow, due to these equations being a coupling of Ricci flow and the Yang-Mills heat flow.  We use maximal regularity theory and ideas of Simonett concerning the asymptotic behavior of abstract quasilinear parabolic partial differential equations to study the stability of the volume-normalized Ricci Yang-Mills flow at Einstein Yang-Mills metrics in dimension two.  In certain cases, we show the presence of a center manifold of fixed points, while in others, we show the existence of an asymptotically stable fixed point.
\end{abstract}
\maketitle

By writing the Ricci flow equations for a metric on a $U(1)$-bundle in the Kaluza-Klein ansatz with fixed fiber size, one obtains a coupled system of equations--an equation that resembles the Ricci flow on the base metric and an equation that resembles the Yang-Mills heat flow on the connection 1-form.  We call this coupled system of equations the Ricci Yang-Mills flow.  Recall that the Yang-Mills heat flow is well-behaved in low dimensions, while the Ricci flow can become singular even in dimension two.  Thus, one hopes to exploit the nice behavior of the Yang-Mills heat flow part of the system to obtain convergence results of the Ricci Yang-Mills flow.

In 1982, Richard Hamilton \cite{Ha1} proposed the Ricci flow as a means to study 3-manifolds with positive Ricci curvature.  Specifically, let $(M^n,g)$ be an $n$-dimensional Riemannian manifold with metric $g$.  The Ricci flow equations are defined to be
\begin{equation}
\label{RF}
\frac{\partial g}{\partial t}=-2Rc,
\end{equation}
where $Rc$ is the Ricci curvature of $g$.  It is well known that the Ricci flow is a weakly parabolic system of equations.  In his seminal paper, Hamilton showed that a closed 3-manifold with positive Ricci curvature is diffeomorphic to a spherical space form, thus that it admits a metric of constant sectional curvature.  

Hamilton developed a program intending to use Ricci flow to prove Thurston's geometrization conjecture, which states that every closed 3-manifold admits a geometric decomposition.  Hamilton's papers yielded great progress towards this goal, studying a multitude of topics such as singularity formulation \cite{Ha_sing}, compactness theorems \cite{Ha_compact}, and nonsingular solutions \cite{Ha_nonsing}.  The recent work of Grisha Perelman, which in particular combined comparison geometry and partial differential equations, has provided much progress in the direction of studying geometrization \cite{Pe1}, \cite{Pe2}.   Additionally, Ricci flow has proven to be a very fruitful area of study in its own right (for overviews, see, e.g., \cite{ChKn}, \cite{Ch}).

On the other hand, the Yang-Mills heat flow is a gauge-theoretic heat equation; that is, it is a differential equation for a field on a principal fiber bundle.  Let $P$ be a smooth principal $G$-bundle over a smooth closed manifold $M$.  If $A$ is a connection on $P$, then $A$ yields an exterior covariant derivative, denoted $D_A$, that acts on $k$-forms with values in $\mathcal{G}$, the Lie algebra of $G$.  The curvature of $A$ is then $F(A)=D_AA$.  We can define the Yang-Mills energy to be
\begin{equation}
\label{YM ftl}
\mathcal{YM}(A)=\frac{1}{2}\int_M|F(A)|^2 dV.
\end{equation} 
The $L^2$ gradient flow for this functional is the Yang-Mills heat flow:
\begin{equation}
\label{YM heat flow}
\frac{\partial A}{\partial t}=-D_A^*F(A),
\end{equation}
where $D_A^*$ is the formal adjoint of $D_A$.  The Yang-Mills heat flow was first used by Atiyah and Bott \cite{AtBo} and by Simon Donaldson \cite{Do}.  Atiyah and Bott used the Yang-Mills heat flow to study the topology of minimal Yang Mills connections.  Donaldson used it to give an analytic proof of a theorem of Narasimhan and Seshadri concerning the relation between stable holomorphic vector bundles and equivalence classes of Yang-Mills connections.  Johan Rade \cite{Ra} studied the behavior of the Yang-Mills heat flow in two and three dimensions and was able to use a technique of L. Simon to show that solutions converge as $t\to \infty$. 

In order to derive a natural coupling of the Ricci flow and the Yang-Mills heat flow, we consider the following setting.  Let $M$ be a closed Riemannian manifold with metric $\underline{g}$, and let $U\subset M$ be a local coordinate chart with coordinates  $\{x^i\}_{i=1}^n$.   Suppose $G$ is a compact Lie group with smooth metric $\bar{g}$ parametrized by the base, and $\{y^\theta\}_{\theta=n+1}^m$ are local coordinates on $G$. Then define $\pi:P\to M$ to be a principal $G$-bundle over $M$, having connection $A$.   We consider a metric $h$ on the total space $P$ of the form
\[h=\underline{g}_{ij}dx^idx^j+\bar{g}_{\theta \rho}(dy^\theta + a_k^\theta dx^k)(dy^\rho+a_l^\rho dx^l).
\]
Here, $a=\sigma^\ast A$, where $\sigma:U\to P$ is a smooth local section.  We have the following basis for one-forms: $dz^i=dx^i$ and $dz^\theta=dy^\theta+a_i^\theta dx^i$ with the corresponding frame $e_i=\frac{\partial}{\partial x^i}-a_i^\theta \frac{\partial}{\partial y^\theta}$ and $e_\theta=\frac{\partial}{\partial y^\theta}$.  
By computing the curvature quantities associated with this metric along with the additional hypothesis that the size of the fiber remains fixed, one obtains the Ricci Yang-Mills flow:
\begin{subequations}
\label{rym}
\begin{align}
\frac{\partial g_{ij}}{\partial t}&=-2R_{ij}+F^k_iF_{kj}\\
\frac{\partial a_i}{\partial t}&=-d^\star F_i.
\end{align}
\end{subequations}
Due to the sign difference that terms involving $F$ add to the Ricci tensor of the bundle, one does not expect that the Ricci Yang-Mills flow will have as its limit an Einstein metric.  Rather, the canonical metric one should hope for is an Einstein Yang-Mills metric; namely, one that is Einstein on the base and that has a Yang-Mills connection.  
\bigskip

The Ricci Yang-Mills flow has been studied simultaneously and independently by Jeffrey Streets in \cite{St} and \cite{St1}.  General convergence results in dimension $n=2$ were obtained in \cite{St}, using different methods. He used similar techniques to that of Struwe \cite{Str}, while we use the machinery of analytic semigroups to take optimal advantage of the parabolic smoothing properties of the quasilinear equation (\ref{rym}) in continuous interpolation spaces. In particular, our techniques show that a fixed point of the flow is exponentially attractive in time in the norm of a smaller  space for solutions whose initial data are close to that fixed point in a larger interpolation space.   Additionally, Dan Jane has shown that the Ricci Yang-Mills flow arises when studying magnetic flows on surfaces \cite{Ja}.

\bigskip

We would like to use maximal regularity theory to study the stability of the Ricci Yang-Mills flow at a fixed point.    In \cite{GIK}, Guenther, Isenberg, and Knopf used the general theory of \cite{Si} to study the stability of the Ricci flow at Ricci-flat metrics.  Additionally, Knopf has recently used these techniques to study the stability of locally $\mathbb{R}^N$-invariant solutions of Ricci flow \cite{Kn}.  We will consider the Ricci Yang-Mills flow to be an ODE on an infinite dimensional space.  We will linearize the right-hand side of the equation and study the spectrum of that operator.  This will determine the local behavior of the flow.  

This paper is organized as follows.   In \S1, we state the theorems concerning asymptotic behavior of general quasilinear partial differential equations; this will provide the framework for the analysis that follows.   In \S 2, we compute the linearization of the Ricci Yang-Mills flow at an Einstein Yang-Mills metric.   We recall the definition of the little H\"{o}lder spaces in \S 3, and in \S 4, we use these spaces to obtain our main stability theorems.

\bigskip

The author would like to thank her thesis advisor Karen Uhlenbeck for suggesting this flow and for much support and also Dan Knopf for many helpful conversations.

\section{Asymptotic Behavior of Quasilinear Partial Differential Equations}

Consider the general equation
\begin{subequations}
\label{quasi}
\begin{align}
\frac{\partial u}{\partial t}&=\Phi(x,t,u,Du,D^2u)\\
u(0)&=u_0.
\end{align}
\end{subequations}
For our purposes, this will be a parabolic system of partial differential equations.  Let $\bar{u}$ be a fixed point of the equation (i.e. $\Phi(\bar{u})=0$), and let  $\Sigma=\sigma(D_{\bar{u}}\Phi)\cap \mathbb{R}$.  Then

\begin{definition}
$\bar{u}$ is 
\begin{itemize}
\item\emph{linearly stable} if $\Sigma \subset (-\infty,0]$,
\item\emph{strictly linearly stable} if $\Sigma \subset (-\infty,0)$,
\item\emph{linearly unstable} if $\Sigma \cap (0,\infty)\neq \emptyset$,
\item\emph{asymptotically stable} if there exists a neighborhood about $\bar{u}$ such that every solution of the equation having initial data in that neighborhood exists for all positive time and converges to $\bar{u}$ as $t\to \infty$.
\end{itemize}
\end{definition}

In the case of the Ricci Yang-Mills flow over a compact surface, we will see that the linearized operator of the right-hand side, in the notation above, either has a zero eigenvalue or has a strictly negative spectrum.  The presence of a zero eigenvalue will correspond to the existence of a finite dimensional center manifold,  while an operator having strictly negative eigenvalues will correspond to a stable fixed point.  We will use the same general analysis in both cases and will point out the differences in the techniques.

\subsection{Center Manifold Theorem}

We would like to analyze the stability of autonomous quasilinear parabolic equations.  Suppose that $\Phi$ in equation (\ref{quasi}a) is a quasilinear elliptic operator, and suppose that we are in the case where 
\[\sup{\{\Re(\lambda): \lambda \in \sigma(D_{\bar{u}}\Phi) \}} \geq 0.\]
This critical case is complicated, and to treat it, we will work with interpolation spaces.  We will consider the case that 
\[\sigma_+(D_{\bar{u}}\Phi):=\{\lambda \in \sigma(D_{\bar{u}}\Phi): \Re (\lambda) \geq 0 \}\]
consists of a finite number of isolated eigenvalues with finite algebraic multiplicity.

 We would like to study the stability of Ricci Yang-Mills flow at Einstein Yang-Mills metrics using \cite{Si}, which essentially shows that if $\Phi$ is a quasilinear differential operator satisfying certain conditions, with $D\Phi$ having a zero eigenvalue, then the local behavior of the flow near a fixed point is characterized by the presence of a local center manifold.   The theorem that we will use is based upon Theorem 2.2 in \cite{GIK}, which in turn is a compendium of results from Theorems 4.1 and 5.8 and Remark 4.2 in \cite{Si}.  The set-up for the theorem is somewhat complicated, so we first collect the necessary assumptions.

\bigskip

Let $\mathbb{X}_1 \hookrightarrow \mathbb{X}_0$ be a continuous dense inclusion of Banach spaces, and let $\mathbb{X}_\alpha$ and $\mathbb{X}_\beta$ denote the continuous interpolation spaces corresponding to fixed \linebreak
$0<\beta<\alpha<1$.  In other words, $\mathbb{X}_\alpha=(X_0,X_1)_{\alpha}$ and similarly for $\mathbb{X}_\beta$. 
Let 
\begin{equation}
\label{center pde}
\frac{\partial}{\partial t}\vec{x}=A(\vec{x})\vec{x}
\end{equation}
be an autonomous quasilinear parabolic equation posed for $t\geq 0$.  Suppose that $\mathcal{U}_\beta\subset \mathbb{X}_\beta$ is an open set and that
\[A(\cdot) \in C^k(\mathcal{U}_\beta,L(\mathbb{X}_1,\mathbb{X}_0))
\]
for some positive integer $k$.

Additionally, assume that there exists a pair $\mathbb{E}_1 \hookrightarrow \mathbb{E}_0$ of Banach spaces and that there exists an extension $\tilde{A}(\cdot)$ of $A(\cdot)$ to domain $D(\tilde{A}(\cdot))$ that is dense in $\mathbb{E}_0$.  We would like the following statements to hold for all $\vec{x}\in \mathcal{U}_\alpha=\mathcal{U}_\beta \cap \mathbb{X}_\alpha$:
\begin{assumption}
\emph{(Requirements for the Center Manifold Theorem)}
\begin{enumerate}
\item $\mathbb{X}_0 \cong D_{\tilde{A}(\vec{x})}(\theta)\cong (\mathbb{E}_0,D(\tilde{A}(\vec{x})))_\theta$ and $\mathbb{X}_1\cong D_{\tilde{A}(\vec{x})}(1+\theta)\cong (\mathbb{E}_0,D(\tilde{A}(\vec{x})))_{1+\theta}$ for some $\theta \in (0,1)$.  Here $(\cdot,\cdot)$ denotes the continuous interpolation method.  Also $D_{\tilde{A}(\vec{x})}(1+\theta)=\{\vec{x}\in D(\tilde{A}):\tilde{A}\vec{x}\in D_{\tilde{A}(\vec{x})}(\theta)\}$.
\item $\mathbb{E}_1\hookrightarrow \mathbb{X}_\beta \hookrightarrow \mathbb{E}_0$ is a continuous and dense inclusion with the property that there are $C>0$ and $\delta \in (0,1)$ such that for all $\eta \in \mathbb{E}_1$, one has
\[||\eta ||_{\mathbb{X}_\beta}\leq C ||\eta ||_{\mathbb{E}_0}^{1-\delta}||\eta ||_{\mathbb{E}_1}^\delta.\]
\item $\tilde{A}(\vec{x}) \in L(\mathbb{E}_1,\mathbb{E}_0)$ generates a strongly continuous analytic semigroup on $L(\mathbb{E}_0)$;
\item $A(\vec{x})$ agrees with the restriction of $\tilde{A}(\vec{x})$ to the dense subset $D(A) \subseteq \mathbb{X}_0$;
\end{enumerate}
\end{assumption}

Let $\hat{x} \in \mathcal{U}_\alpha$ be a fixed point of equation (\ref{center pde}).  Suppose the spectrum $\sigma$ of the linearized operator $\left. DA\right|_{\hat{x}}$ admits the decomposition $\sigma = \sigma_s \cup \sigma_{cu}$, where $\sigma_s \subset \{z: \Re(z) <0 \}$ and where $\sigma_{cu} \subset \{z: \Re(z) \geq 0 \}$ consists of finitely many eigenvalues of finite multiplicity.  Suppose further that $\sigma_{cu} \subset i \mathbb{R}$, and let $S(\lambda)$ denote the algebraic eigenspace of $\lambda \in \sigma_{cu}$.  In what follows, $B(\mathbb{X},x,d)$ denotes a ball centered at $x$ having radius $d$ measured with respect to $||\cdot||_\mathbb{X}$.

\begin{theorem}
\emph{(Statement of the Center Manifold Theorem)} 
\label{simonett}

$\mathbb{X}_\alpha$ admits the decomposition $\mathbb{X}_\alpha = \mathbb{X}_\alpha^s \oplus \mathbb{X}_\alpha^{cu}$ for all $\alpha \in [0,1]$, where $\mathbb{X}_\alpha^{cu} \equiv \oplus_{\lambda \in \sigma_{cu}} S(\lambda)$.
For each $r \in \mathbb{N}$, there exists $d_r >0$ such that for all $d \in (0,d_r]$, there is a $C^r$ manifold
$\mathcal{M}_{loc}^{cu}$ that is locally invariant for solutions of (\ref{center pde}) as long as they remain in $B(\mathbb{X}_1^{cu},\hat{x},d)\times B(\mathbb{X}_1^s,\hat{x},d)$.
It satisfies $T_{\hat{x}}\mathcal{M}_{loc}^{cu} \cong \mathbb{X}_1^{cu}$, so that $\mathcal{M}_{loc}^{cu}$ is a \emph{local center manifold}.  

For all $\alpha \in (0,1)$, there are constants $C_\alpha >0$ independent of $\hat{x}$ and constants $\omega >0$ and $\hat{d} \in (0,d_0]$ such that one has
\[||\pi^s (\vec{x}(t))-\phi(\pi^{cu}\vec{x}(t))||_{\mathbb{X}_1} \leq \frac{C_\alpha}{t^{1-\alpha}}e^{-\omega t}||\pi^s(\vec{x}(0))-\phi(\pi^{cu}\vec{x}(0))||_{\mathbb{X}_\alpha}
\]
for all solutions $\vec{x}(t)$ with $\vec{x}(0) \in B(\mathbb{X}_\alpha,\hat{x},d)$ and all times $t\geq 0$ such that the solution $\vec{x}(t)$ remains in $B(\mathbb{X}_\alpha,\hat{x},d)$.  Here $\pi^s$ and $\pi^{cu}$ denote the projections onto $\mathbb{X}_\alpha^s \cong (\mathbb{X}_1^s,\mathbb{X}_0^s)_\alpha$ and $\mathbb{X}^{cu}_\alpha$ respectively.
\end{theorem}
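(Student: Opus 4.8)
The plan is to deduce this statement from the abstract center manifold and attractivity results of Simonett \cite{Si} --- Theorems 4.1 and 5.8 together with Remark 4.2 --- in the packaged form already exploited in \cite{GIK} (Theorem 2.2). Assumptions (1)--(4) are designed precisely to place equation (\ref{center pde}) into the maximal-regularity framework required there, so the work consists of checking that those hypotheses translate into Simonett's and then invoking his conclusions.

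First I would use Assumptions (1), (3), (4) to show that for each $\vec{x}\in\mathcal{U}_\alpha$ the operator $A(\vec{x})$ lies in the class $\mathcal{H}(\mathbb{X}_1,\mathbb{X}_0)$ of generators of analytic semigroups enjoying maximal regularity: Assumption (3) furnishes an analytic semigroup for $\tilde{A}(\vec{x})$ on $\mathbb{E}_0$ with domain $\mathbb{E}_1$; Assumption (1) identifies $\mathbb{X}_0$ and $\mathbb{X}_1$ with the continuous-interpolation spaces $D_{\tilde{A}(\vec{x})}(\theta)$ and $D_{\tilde{A}(\vec{x})}(1+\theta)$, and it is standard that restricting an analytic semigroup to such a pair yields again an analytic semigroup of the same type; Assumption (4) guarantees $A(\vec{x})$ is exactly this restriction. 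Since $A(\cdot)\in C^k(\mathcal{U}_\beta,L(\mathbb{X}_1,\mathbb{X}_0))$ and the resolvent bounds are locally uniform, the map $\vec{x}\mapsto A(\vec{x})$ is an admissible quasilinear structure in Simonett's sense, and $\vec{x}\mapsto A(\vec{x})\vec{x}$ is $C^k$ from $\mathcal{U}_\beta$ into $\mathbb{X}_0$; Assumption (2), in particular the inequality $\|\eta\|_{\mathbb{X}_\beta}\le C\|\eta\|_{\mathbb{E}_0}^{1-\delta}\|\eta\|_{\mathbb{E}_1}^{\delta}$, is what permits this nonlinearity to be controlled in the $\mathbb{X}_\beta$-topology in which the semiflow is constructed.

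Next I would split the phase space by the Dunford projection $\pi^{cu}$ associated with the compact spectral set $\sigma_{cu}$. Because $\sigma_{cu}$ consists of finitely many eigenvalues of finite algebraic multiplicity, $\mathbb{X}_\alpha^{cu}:=\oplus_{\lambda\in\sigma_{cu}}S(\lambda)$ is finite-dimensional, $\pi^{cu}$ is bounded on every $\mathbb{X}_\alpha$ and commutes with the interpolation functor; this gives the decomposition $\mathbb{X}_\alpha=\mathbb{X}_\alpha^s\oplus\mathbb{X}_\alpha^{cu}$ with $\mathbb{X}_\alpha^{cu}$ the same space for all $\alpha$ (all interpolation norms being equivalent there) and $\mathbb{X}_\alpha^s\cong(\mathbb{X}_1^s,\mathbb{X}_0^s)_\alpha$. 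On $\mathbb{X}^{cu}$ the linearization $DA|_{\hat{x}}$ restricts to a bounded operator with spectrum $\sigma_{cu}\subset i\mathbb{R}$, while on $\mathbb{X}^s$ it retains the spectral gap $\Re(z)<0$. With this splitting and the hypothesis $\sigma_{cu}\subset i\mathbb{R}$ (no unstable directions), Simonett's Theorem 4.1 produces, for each $r$, a $C^r$ locally invariant graph $\mathcal{M}^{cu}_{loc}=\{\hat{x}+\xi+\phi(\xi):\xi\in B(\mathbb{X}_1^{cu},\hat{x},d)\}$ over $\mathbb{X}_1^{cu}$ with $\phi(0)=0$ and $D\phi(0)=0$, whence $T_{\hat{x}}\mathcal{M}^{cu}_{loc}\cong\mathbb{X}_1^{cu}$; this is the local center manifold.

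Finally, the displayed estimate is the exponential attractivity with asymptotic phase of $\mathcal{M}^{cu}_{loc}$, which is Theorem 5.8 and Remark 4.2 of \cite{Si}: the stable deviation $\pi^s(\vec{x}(t))-\phi(\pi^{cu}\vec{x}(t))$ decays like $e^{-\omega t}$ in $\mathbb{X}_1$ for some $\omega>0$ controlled by the stable spectral gap, and the analytic-semigroup smoothing estimate converts an initial deviation measured in the weaker norm $\mathbb{X}_\alpha$ into the stronger norm $\mathbb{X}_1$ at the cost of the factor $t^{-(1-\alpha)}$. The one genuinely delicate point --- and the step I expect to be the main obstacle --- is verifying that $C_\alpha$ can be chosen independently of the fixed point $\hat{x}$: this requires that all resolvent and maximal-regularity constants, and the norm of $\pi^{cu}$, depend only on bounds for $A(\cdot)$ and its derivatives on $\mathcal{U}_\alpha$ and on the position of $\sigma_{cu}$ relative to the imaginary axis, all of which are uniform over the bounded family of fixed points under consideration. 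Tracking this uniformity through Simonett's estimates, rather than any new analysis, is where the real care lies.
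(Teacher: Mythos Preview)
Your proposal is correct and matches the paper's approach: the paper itself does not prove this theorem but simply refers the reader to Sections 4 and 5 of \cite{Si} (specifically Theorems 4.1 and 5.8 and Remark 4.2, as packaged in Theorem 2.2 of \cite{GIK}). Your outline is in fact considerably more detailed than what the paper provides, but the substance is identical---deduce the result from Simonett's abstract theory after noting that Assumptions (1)--(4) are exactly what is needed to apply it.
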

To summarize, the presence of eigenvalues with real part being zero corresponds to the existence of an exponentially attractive local center manifold.  The convergence is measured in the $\mathbb{X}_1$ norm for all initial data in an $\mathbb{X}_\alpha$ neighborhood of the fixed point.
\begin{proof}
For the proof of the theorem, we refer the reader to Sections 4 and 5 of \cite{Si}. 
\end{proof}

\subsection{Asymptotic Stability Theorem}
Now suppose that we are in the case of $\sigma_{cu} = \emptyset$; i.e. $\sigma(A)\equiv \sigma_s\subset \{z: \Re(z) <0 \}$.  In this case, our "center manifold" will consist of a single point.  We would still like to use the machinery of Simonett, as this monopolizes upon the smoothing properties of quasilinear parabolic equations and yields an optimal regularity result that solutions in a $\mathbb{X}_\alpha$-neighborhood of a fixed point converge exponentially fast in $\mathbb{X}_1$-norm to the fixed point.  We use the following adaptation of Theorem \ref{simonett} to show that a fixed point is asymptotically stable.

\begin{theorem}
\emph{(Statement of the Asymptotic Stability Theorem)}
\label{simonett 2}

Suppose that the hypotheses of Theorem \ref{simonett} are satisfied.  As before, let $\hat{x}\in \mathcal{U}_{\alpha}$ be a fixed point of equation (\ref{center pde}).  Suppose also that $\sup\{\Re(\lambda): \lambda \in \sigma \}\leq -\delta$ for some $\delta>0$.  Then for all $\alpha \in (0,1)$, there are constants $C_\alpha >0$ independent of $\hat{x}$ and constants $\omega >0$ and $\hat{d}\in (0,d_0]$ such that one has
\[||\vec{x}(t)-\hat{x}||_{\mathbb{X}_1}\leq \frac{C_\alpha}{t^{1-\alpha}}e^{-\omega t}||\vec{x}(0)-\hat{x}||_{\mathbb{X}_\alpha},
\]
for all solutions $\vec{x}(t)$ with $\vec{x}(0)\in B(\mathbb{X}_\alpha,\hat{x},d)$ and all times $t\geq 0$ such that the solution $\vec{x}(t)$ remains in $B(\mathbb{X}_\alpha,\hat{x},d)$.
\end{theorem}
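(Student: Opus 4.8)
The plan is to obtain Theorem \ref{simonett 2} as the degenerate case $\sigma_{cu}=\emptyset$ of Theorem \ref{simonett}. First I would observe that the hypothesis $\sup\{\Re(\lambda):\lambda\in\sigma\}\le-\delta<0$ forces the set $\sigma_{cu}=\{\lambda\in\sigma:\Re(\lambda)\ge0\}$ to be empty, and that the empty set vacuously ``consists of finitely many eigenvalues of finite multiplicity'' and satisfies $\emptyset\subset i\mathbb{R}$. Thus, taking $\sigma_s=\sigma$ and $\sigma_{cu}=\emptyset$, all spectral requirements of Theorem \ref{simonett} are met, while the structural assumptions (the Assumptions block) are inherited unchanged from the hypotheses of Theorem \ref{simonett 2}.

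Next I would read off the output of Theorem \ref{simonett} in this situation. The decomposition $\mathbb{X}_\alpha=\mathbb{X}_\alpha^s\oplus\mathbb{X}_\alpha^{cu}$ has $\mathbb{X}_\alpha^{cu}=\oplus_{\lambda\in\emptyset}S(\lambda)=\{0\}$, so $\mathbb{X}_\alpha=\mathbb{X}_\alpha^s$ for every $\alpha$, the projection $\pi^{cu}$ vanishes, and $\pi^s=\mathrm{id}$. The local center manifold $\mathcal{M}_{loc}^{cu}$ is a $C^r$ manifold with $T_{\hat{x}}\mathcal{M}_{loc}^{cu}\cong\mathbb{X}_1^{cu}=\{0\}$; being zero-dimensional and containing the equilibrium $\hat{x}$, it coincides with $\{\hat{x}\}$ near $\hat{x}$, so the graph map $\phi$ satisfies $\phi(\pi^{cu}\vec{x})=\hat{x}$ identically. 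Substituting $\pi^s=\mathrm{id}$ and $\phi\equiv\hat{x}$ into the attractivity estimate of Theorem \ref{simonett} yields
\[\|\vec{x}(t)-\hat{x}\|_{\mathbb{X}_1}\le\frac{C_\alpha}{t^{1-\alpha}}e^{-\omega t}\|\vec{x}(0)-\hat{x}\|_{\mathbb{X}_\alpha}\]
for all $\alpha\in(0,1)$, with the same constants $C_\alpha$, $\omega>0$, $\hat{d}\in(0,d_0]$ furnished by Theorem \ref{simonett}, valid for $\vec{x}(0)\in B(\mathbb{X}_\alpha,\hat{x},d)$ and all $t\ge0$ for which $\vec{x}(t)$ remains in that ball. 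I would also remark that the rate $\omega$ may be taken to be any number in $(0,\delta)$, since in Simonett's construction the rate of attraction to the center manifold is governed by the spectral bound $\sup\Re(\sigma_s)\le-\delta$, which here controls the entire solution.

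I do not anticipate a substantive obstacle: the analytic content is entirely contained in Theorem \ref{simonett}, and what remains is the bookkeeping needed to make the degenerate splitting consistent --- chiefly the identification $\mathcal{M}_{loc}^{cu}=\{\hat{x}\}$, which follows from the trivial tangent space together with invariance of the equilibrium under the flow. Should a self-contained argument be preferred, one could instead run the classical variation-of-parameters scheme: set $\vec{x}=\hat{x}+\vec{y}$, use that the linearization $\left.D\tilde{A}\right|_{\hat{x}}$ (the extension to $\mathbb{E}_0$ of $\left.DA\right|_{\hat{x}}$) generates an analytic semigroup with spectral bound $\le-\delta$, deduce the smoothing bounds $\| e^{t\left.D\tilde{A}\right|_{\hat{x}}} \|_{L(\mathbb{X}_\alpha,\mathbb{X}_1)}\le C_\alpha t^{-(1-\alpha)}e^{-\omega t}$ along the interpolation scale, and close a contraction estimate for the remainder $A(\vec{x})\vec{x}-\left.D\tilde{A}\right|_{\hat{x}}\vec{y}=o(\|\vec{y}\|)$ near the fixed point. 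This is precisely the route taken for Theorem 5.8 of \cite{Si}, so invoking Theorem \ref{simonett} directly is the economical choice.
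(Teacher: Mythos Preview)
Your approach is correct and matches the paper's own treatment: the paper does not give a separate proof of Theorem~\ref{simonett 2} but introduces it explicitly as ``the following adaptation of Theorem~\ref{simonett},'' i.e., the degenerate case $\sigma_{cu}=\emptyset$, and then remarks that Lunardi's semigroup argument gives an alternative route---precisely the self-contained variation-of-parameters scheme you sketch at the end. There is nothing to add.
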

 
\begin{remark}
One can also use the theory of semigroups to prove a slightly less general result.  In \cite{Lu2}, Lunardi shows exponential convergence to a fixed point for quasilinear parabolic PDE for solutions having initial data in a $\mathbb{X}_1$-neighborhood of the fixed point.  The analysis is slightly less complicated than that of Simonett and has the advantage that it can be extended to fully nonlinear PDE, as in Chapter 9 of \cite{Lu}.
\end{remark}

\section{Linearizing the Flow}
We would like to consider the stability of the Ricci Yang-Mills flow at a fixed point.  On a surface, we can let $g=e^uh$, where $h$ is a fixed constant curvature metric.  The Ricci Yang-Mills flow equations then become
\begin{subequations}
\label{rym surface}
\begin{align}
\partial_t u&=\Delta_g u -R_he^{-u}+\frac{1}{2}|F|^2\\
\partial_t a&=-d^\star F.
\end{align}
\end{subequations}
Notice that the equation for $a$ is not quite parabolic; the right-hand side is comprised of ``one-half'' of the laplacian.  We remedy this by using a $1$-parameter family of diffeomorphisms.  
\begin{lemma}
\label{transform}
Equation \ref{rym surface} is equivalent to a parabolic flow via pullback by diffeomorphisms.
\end{lemma}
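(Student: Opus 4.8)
We take the view that the failure of (\ref{rym surface}b) to be parabolic is the gauge-theoretic shadow of the diffeomorphism invariance of Ricci flow, and we propose to remove it by a DeTurck-type device in which the ``diffeomorphism'' is a one-parameter family of $U(1)$ gauge transformations of $P$. Since a gauge transformation of a $U(1)$-bundle is a bundle automorphism covering the identity, it is in particular a diffeomorphism of the total space $P$, so this genuinely is a pullback by diffeomorphisms. The degeneracy comes from the invariance $F=da=d(a+d\phi)$: the right-hand side $-d^{\star}F=-d^{\star}da$ supplies only the $d^{\star}d$ half of the Hodge Laplacian $\Delta_{H}=dd^{\star}+d^{\star}d$ on $1$-forms, so its principal symbol is not elliptic. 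Equation (\ref{rym surface}a), on the other hand, is already a scalar quasilinear parabolic equation, because on a surface $\Delta_{g}u=e^{-u}\Delta_{h}u$; only the $a$-equation needs repair.

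Accordingly, the first step is to introduce the \emph{modified flow}, obtained from (\ref{rym surface}) by leaving the $u$-equation untouched and replacing $\partial_{t}a=-d^{\star}F$ by
\[
\partial_{t}a = -d^{\star}F - d\,d^{\star}a = -\Delta_{H}a,
\]
with all Hodge operators and norms taken with respect to $g=e^{u}h$. One then checks that the coupled system for $(u,a)$ is parabolic in the sense of Petrowsky: the linearization has a block-triangular principal symbol with diagonal blocks $-|\xi|_{g}^{2}$ (on the $u$-factor) and $-|\xi|_{g}^{2}\,\mathrm{Id}$ (on the $T^{\ast}M$-factor) and vanishing upper block, since $\tfrac12|F|^{2}=\tfrac12|da|^{2}$ is only first order in $a$; the lower block may be nonzero, as the Weitzenb\"{o}ck curvature term in $\Delta_{H}a$ feels $\Delta_{h}u$, but harmlessly so, the spectrum of the symbol being $\{-|\xi|_{g}^{2}\}\subset(-\infty,0)$. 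Standard quasilinear parabolic theory then gives short-time existence and uniqueness for the modified flow.

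The second step is to pass between solutions of (\ref{rym surface}) and solutions of the modified flow by gauge transformations. Given a solution $(\bar{u},\bar{a})$ of the modified flow on $[0,T)$, set $u:=\bar{u}$ (a vertical automorphism does not move the base metric), let $\phi(t):=\int_{0}^{t}d^{\star}_{\bar{g}(\tau)}\bar{a}(\tau)\,d\tau$ with $\bar{g}=e^{\bar{u}}h$, let $s_{\phi(t)}\colon P\to P$ be the associated bundle automorphism, which acts on connections by $a\mapsto a+d\phi$, and put $a(t):=s_{\phi(t)}^{\ast}\bar{a}(t)=\bar{a}(t)+d\phi(t)$. Since $d\phi$ is exact, $F=da=d\bar{a}$, so the $u$-equation is unchanged, and $\partial_{t}a=\partial_{t}\bar{a}+d\,\partial_{t}\phi=-d^{\star}d\bar{a}-d\,d^{\star}\bar{a}+d\,d^{\star}\bar{a}=-d^{\star}d\bar{a}=-d^{\star}F$ (using $F=da=d\bar{a}$), so $(u,a)$ solves (\ref{rym surface}). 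Conversely, given a solution $(u,a)$ of (\ref{rym surface}), solve the linear scalar heat equation $\partial_{t}\phi=-\Delta_{H}\phi+d^{\star}_{g}a$ with $\phi(0)=0$ and check that $(u,\,a-d\phi)$ solves the modified flow; the two assignments are mutually inverse, which gives the asserted equivalence and, as a byproduct, short-time existence and uniqueness for (\ref{rym surface}).

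The main obstacle, and essentially the only place where work is required, is the parabolicity verification of the first step: one must check that the coupling between the $u$- and $a$-equations cannot spoil the triangular structure of the principal symbol --- that $\partial^{2}a$ does not enter the $u$-equation, and that whatever second-order $u$-dependence enters the $a$-equation sits strictly below the diagonal --- and, for the ``pullback by diffeomorphisms'' statement to hold on the nose, that the term $-d\,d^{\star}a$ added in the modified flow is precisely the one generated by the infinitesimal gauge action rather than an ad hoc regularization. By comparison, the production of $\phi$ in the second step is routine: in one direction it is an explicit time integral, and in the other a linear parabolic scalar equation with known forcing.
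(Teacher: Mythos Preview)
Your argument is correct and is essentially the paper's own proof. The paper chooses the DeTurck vector field $W$ on the total space $P$ with horizontal components $W^{k}=g^{ij}(\Gamma_{ij}^{k}-\tilde\Gamma_{ij}^{k})$ and vertical component $W^{3}=-d^{\star}a$, observes that for a conformal metric $g=e^{u}h$ on a surface the horizontal components vanish identically, and pulls back by the flow $\phi_{t}$ of $W$ to obtain exactly your modified system $\partial_{t}a=-d^{\star}F-dd^{\star}a$. Since a purely vertical vector field on a $U(1)$-bundle generates precisely a one-parameter family of gauge transformations, the paper's diffeomorphisms are your $s_{\phi(t)}$; the two presentations differ only in that the paper packages the trick as a total-space DeTurck argument (making contact with the Ricci flow literature) while you phrase it directly in gauge-theoretic language, and you supply the explicit parabolicity check and the inverse construction that the paper leaves implicit.
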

\begin{proof}
We choose a vector field $W^k=g^{ij}(\Gamma_{ij}^k-\tilde{\Gamma}_{ij}^k)$ for $k=1,2$, where $\tilde{\Gamma}$ is the Christoffel symbol with respect to the fixed background metric $h$.  Notice that if $g_{ij}=e^uh_{ij}$ on a surface, then $W^k=0$.  Additionally, let $W^3=-d^\star a$.   If $h$ is the metric on our principal bundle in the Kaluza-Klein ansatz satisfying equation (\ref{rym surface}), then $\phi_t^\ast h$ satisfies 
\begin{subequations}
\label{grym}
\begin{align}
\partial_t u&=\Delta_g u -R_he^{-u}+\frac{1}{2}|F|^2\\
\partial_t a&=-d^\star F-dd^\star a,
\end{align}
\end{subequations}
where $\phi_t$ is the one-parameter family of diffeomorphisms generated by $W$.  We will call this flow GRYM, and we will choose to work with these equations as they are parabolic.
\end{proof}

We would like to do stability analysis of the fixed points of our flow, which should be the natural geometric limit of the Ricci Yang-Mills flow; namely, we want them to be Einstein Yang-Mills metrics.  An Einstein Yang-Mills metric $(g,a)$ is one such that $g$ is Einstein and $d^\star F_a=0$.    In order to make this work, we must consider a normalized version of equations (\ref{rym surface}) which we will call NGRYM.  We have
\begin{subequations}
\label{ngrym}
\begin{align}
\partial_t u&=\Delta_g u -R_he^{-u}+\frac{1}{2}|F|^2+r-\frac{1}{2}f\\
\partial_t a&=-d^*F-dd^*a,
\end{align}
\end{subequations}
where $r$ and $f$ are the averages of scalar curvature and bundle curvature, respectively; i.e.  $r=\frac{\int_MR dV}{\int dV}$ and $f=\frac{\int_M|F|^2 dV}{\int dV}$.  Since $M$ is a surface, $r$ is a constant in space and time. 
\begin{remark}
NGRYM should be thought of as a certain volume-normalizing flow, in that the volume of the base manifold is fixed.  Due to the lack of scale invariance on the right-hand side of equation (\ref{grym}a), this flow is not quite a rescaling of our original equation.  However, there is evidence that a normalization of this form is useful in proving convergence of the flow \cite{St}.  So our results will be applied to this closely related flow.
\end{remark}

We claim that an Einstein Yang-Mills manifold is a fixed point of the flow.  To see this, we need the following lemma.

\begin{lemma}
\label{Yang-Mills connection}
In the case of a $U(1)$ bundle over a compact surface $M$, a Yang-Mills connection has the property that its curvature is a constant times the volume form; i.e. $F= \lambda dV$, where $\lambda$ is determined by the Chern number of the bundle. 
\end{lemma}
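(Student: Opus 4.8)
The plan is to exploit the fact that $U(1)$ is abelian to reduce the Yang--Mills equation to the statement that $F$ is a harmonic $2$-form, and then to use that on a closed surface the space of harmonic $2$-forms is one-dimensional and spanned by $dV$. First I would record what ``Yang--Mills'' means in this setting: since $G=U(1)$ is abelian, the adjoint bundle $\operatorname{ad}P$ is trivial, so the curvature $F=F(A)$ descends to a genuine $\mathfrak{u}(1)$-valued (equivalently, real) $2$-form on $M$, and the exterior covariant derivative $D_A$ on $\operatorname{ad}P$-valued forms reduces to the ordinary $d$. Hence the Bianchi identity becomes $dF=0$, and the Yang--Mills condition $D_A^\star F=0$ becomes $d^\star F=0$, so $F$ is a harmonic $2$-form on the closed surface $M$.

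Next I would extract the constancy statement directly. Every $2$-form on a surface can be written $F=\phi\,dV$ for a smooth function $\phi$, and in dimension two one has $\star dV=1$ and $d^\star=-\star d\,\star$ on $2$-forms, so
\[ d^\star F \;=\; -\star d\bigl(\star(\phi\,dV)\bigr) \;=\; -\star d\phi. \]
Since $\star$ is an isomorphism, $d^\star F=0$ forces $d\phi=0$, so $\phi$ is locally constant; as $M$ is connected, $\phi\equiv\lambda$ for a constant $\lambda$, giving $F=\lambda\,dV$. (Equivalently, one can simply invoke Hodge theory: $\dim\mathcal{H}^2(M)=b_2(M)=1$, with generator $dV$, so the harmonic form $F$ is a constant multiple of $dV$.)

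To pin down $\lambda$ I would use Chern--Weil theory. Up to the standard normalization, the closed $2$-form $\tfrac{1}{2\pi}F$ is a de Rham representative of the first Chern class $c_1(P)\in H^2(M;\mathbb{Z})$, so integrating over $M$ gives $\int_M \tfrac{1}{2\pi}F=\langle c_1(P),[M]\rangle=:n$, the Chern number of $P$. Since $F=\lambda\,dV$ and $\operatorname{Vol}(M)=\int_M dV$, this yields $\lambda=\dfrac{2\pi n}{\operatorname{Vol}(M)}$ (up to a convention-dependent sign and factor of $i$), so $\lambda$ is determined by the topology of $P$ together with the fixed base volume. One should also note in passing that, because $\operatorname{Vol}(M)$ is held fixed along NGRYM, this constant $\lambda$ is genuinely time-independent, which is what makes an Einstein Yang--Mills metric an honest fixed point of the flow.

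There is no serious obstacle in this argument: the entire content is that ``Yang--Mills plus Bianchi implies harmonic,'' combined with the triviality, up to scale, of $H^2$ of a surface. The only points deserving care are the normalization conventions — the sign in $d^\star$ and the factors of $2\pi$ and $i$ in the Chern--Weil representative, which differ across references — and the explicit use of connectedness and compactness of $M$.
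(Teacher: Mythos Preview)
Your proof is correct and follows essentially the same route as the paper: use that $U(1)$ is abelian so Yang--Mills plus Bianchi make $F$ harmonic, write $F=\phi\,dV$ and conclude $\phi$ is constant, then read off $\lambda$ from the Chern number via $c=\frac{1}{2\pi}\int_M F$. The only cosmetic difference is that the paper passes through $\Delta\phi=0$ (using that $dV$ is harmonic) rather than your direct computation $d^\star(\phi\,dV)=-\star d\phi$, but these are equivalent one-line reductions.
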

\begin{proof}
We can write $F$ as $F=f(x)dV$ for some function $f$.  $F$ being the curvature of a Yang-Mills connection implies that $d^\star F=0$, and since $U(1)$ is abelian, $dF=0$.  Thus $(dd^\star+d^\star d)F=(dd^\star+d^\star d)(f(x)dV)=0$.  Since $d(dV)=d^\star(dV)=0$, the above shows that $\Delta f=0$.  As $M$ is compact, $f$ must be a constant $\lambda$.

By definition, the Chern number $c$  of a $U(1)$ bundle is given by
\[c =\frac{1}{2\pi}\int_M F.
\]  
But now, we have $c=\frac{1}{2\pi}\int_M \lambda dV$, so $\lambda =\frac{2\pi c}{\int_M dV}$.
\end{proof}

\noindent
Also for an Einstein metric, $u=C$ for some constant $C$. If we write our Yang-Mills connection in the Coulomb gauge ($d^\star a=0$), then we see that an Einstein Yang-Mills metric is a fixed point of the NGRYM. 

One can compute the linearization about an Einstein Yang-Mills metric of the right-hand side of equation (\ref{ngrym}) in the standard fashion.   Let $\left.\frac{\partial u}{\partial t}\right|_{t=0}=v$ and $\left.\frac{\partial a}{\partial t}\right|_{t=0}=b$.   We use the previous characterization of Einstein Yang-Mills metrics, as well as the fact that a Yang-Mills connection is a minimizer of the Yang-Mills functional $\int |F|^2$.  Let $L_1(v,b)$ denote the linearization of equation (\ref{ngrym}a) in the direction of $(v,b)$ and $L_2(v,b)$ denote that of equation (\ref{ngrym}b).
 
\begin{lemma}
The linearization of the right-hand side of equation (\ref{ngrym}) at an Einstein Yang-Mills metric is
\begin{subequations}
\label{linearization}
\begin{align}
L_1(v,b)&=\Delta_h v+(R_h-\lambda^2)v+\lambda\langle db,dV_{h} \rangle,\\
L_2(v,b)&=\Delta_d b -\lambda \mathrm{div} (v dV_h).
\end{align}
\end{subequations}
Here the subscript $h$ denotes quantities measured with respect to the fixed background metric, and $\Delta_d$ is the Hodge-de Rham Laplacian.
\end{lemma}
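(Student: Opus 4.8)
The plan is to reduce everything to the conformal factor $u$ by writing $g=e^{u}h$ on the surface $M$, and then to differentiate the right-hand sides of (\ref{ngrym}) at the Einstein Yang--Mills point. There, by the discussion preceding the lemma, $u$ is a constant, which I normalize to $0$ by replacing $h$ with the equilibrium metric itself; $F=\lambda\,dV_h$ by Lemma \ref{Yang-Mills connection}; and $d^{\star}a=0$, since the connection is in Coulomb gauge. The two-dimensional conformal identities I would use are $\Delta_g=e^{-u}\Delta_h$ on functions, $R_g=e^{-u}(R_h-\Delta_h u)$, $dV_g=e^{u}\,dV_h$, $|F|^{2}_g=e^{-2u}|F|^{2}_h$ on two-forms, and the fact that in dimension two the Hodge star $\star_g$ is conformally invariant on $1$-forms and carries weight $e^{-u}$ from two-forms to functions. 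Since $U(1)$ is abelian, $F=da$ has no metric dependence, so if $v:=\delta u$ and $b:=\delta a$ then $\delta F=db$.

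I would first linearize (\ref{ngrym}a). Term by term: $\delta(\Delta_g u)=\Delta_h v$, the variation of the prefactor multiplying $\Delta_h u=0$; $\delta(-R_h e^{-u})=R_h v$; and, using $|F|^{2}_g=e^{-2u}|F|^{2}_h$ together with $|dV_h|_h=1$ and $F=\lambda\,dV_h$, one gets $\delta(\tfrac{1}{2}|F|^{2}_g)=-\lambda^{2}v+\lambda\langle db,dV_h\rangle$. It then remains to check that the averaged terms do not contribute. The quantity $r$ is a spacetime constant, since $\int_M R_g\,dV_g$ is topological by Gauss--Bonnet and $\int_M dV_g$ is held fixed along NGRYM, so $\delta r=0$. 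For $f$, writing $\int_M|F|^{2}_g\,dV_g=\int_M e^{-u}|F|^{2}_h\,dV_h$ and differentiating gives $-\lambda^{2}\int_M v\,dV_h+2\lambda\int_M db$; the first integral vanishes because the volume constraint of NGRYM forces $\int_M v\,dV_h=0$, and the second vanishes by Stokes on the closed surface $M$ (equivalently, the first variation of $\mathcal{YM}$ at a Yang--Mills connection is zero). Hence $\delta f=0$, and summing the three surviving terms yields $L_1(v,b)=\Delta_h v+(R_h-\lambda^{2})v+\lambda\langle db,dV_h\rangle$.

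Next I would linearize (\ref{ngrym}b), written $\partial_t a=-d^{\star}_g(da)-d(d^{\star}_g a)$, keeping in mind that each $d^{\star}_g$ depends on $g$, hence on $u$. Varying $a$ alone gives $-d^{\star}_h(db)-d(d^{\star}_h b)=-(d^{\star}_h d+d\,d^{\star}_h)b$, which is $\Delta_d b$ in the lemma's sign convention for $\Delta_d$. Varying $g$ through $u$: in the term $d(d^{\star}_g a)$ the variation of $d^{\star}_g$ hits the equilibrium connection, but since $d^{\star}_g$ equals $e^{-u}d^{\star}_h$ on $1$-forms and $d^{\star}_h a=0$ in Coulomb gauge, this contributes nothing; in the term $-d^{\star}_g(da)$ the variation of $d^{\star}_g$ hits $F=\lambda\,dV_h$, and computing $d^{\star}_g=-\star_g d\,\star_g$ on two-forms with the conformal weights above produces a term proportional to $\lambda\,\star_h(dv)$, which the statement records as $-\lambda\,\mathrm{div}(v\,dV_h)$. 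Assembling, $L_2(v,b)=\Delta_d b-\lambda\,\mathrm{div}(v\,dV_h)$.

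The main obstacle is the bookkeeping in the third paragraph: one must track the conformal weight of $\star_g$ on each form degree as $d^{\star}_g$ is differentiated, and fix the sign conventions for $d^{\star}$, for $\Delta_d$, and for $\mathrm{div}$ consistently so that the cross term in $L_2$ acquires exactly the stated sign; the corresponding care with the volume normalization is what makes the $r$ and $f$ contributions to $L_1$ drop out cleanly. Everything else is a routine first-variation computation built from the two-dimensional conformal identities and the Einstein Yang--Mills structure of the fixed point.
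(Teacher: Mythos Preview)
The paper does not prove this lemma in the text; it simply refers to the author's thesis \cite{Yo}. Your direct first-variation computation, organized around the two-dimensional conformal identities $\Delta_g=e^{-u}\Delta_h$, $|F|^2_g=e^{-2u}|F|^2_h$, $dV_g=e^{u}dV_h$, and the conformal behavior of $\star_g$ in each form degree, is exactly the natural approach and yields the stated formulas. Your handling of $L_2$, including the identification of the cross term $-\lambda\,\mathrm{div}(v\,dV_h)$ with $-\lambda\,\star_h dv$ via $d^\star_g$ on two-forms, is correct.

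The one point that needs tightening is your disposal of the normalizing terms $r-\tfrac{1}{2}f$. You argue that $\int_M v\,dV_h=0$ because ``the volume constraint of NGRYM'' forces it, but NGRYM is a flow, not a constrained variational problem: that volume is preserved \emph{along solutions} does not by itself force an arbitrary first-order perturbation $v$ of the fixed point to have mean zero. Without that restriction one picks up a nonlocal contribution $\delta r-\tfrac{1}{2}\delta f=\tfrac{\lambda^{2}-R_h}{V}\int_M v\,dV_h$ in $L_1$. The paper's stated linearization omits this term as well, and its subsequent spectral analysis (the treatment of constant $v$ in Lemma~\ref{spectrum flat 2} and the remark immediately following it) is consistent with either treating $r$ and $f$ as fixed numerical constants rather than as functionals of $g$, or treating them as functionals but restricting to mean-zero $v$. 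Either convention gives exactly the formula in the lemma; you should simply state explicitly which one you are adopting, rather than lean on the volume-preservation-along-the-flow argument, which conflates variation of the flow with variation at the fixed point.
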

\begin{proof}
For details, see \cite{Yo}.
\end{proof}

\section{Little H\"{o}lder Spaces}
In order to use the maximal regularity theory, we use spaces that are suitable for this context, namely the little H\"{o}lder spaces.  The little H\"{o}lder space $\mathfrak{h}^{k+\alpha}$ of functions is defined to be the closure of the $C^\infty$ functions with respect to the $||\cdot||_{k+\alpha}$ norm.
\noindent
To be more precise, recall the definition of the H\"{o}lder space $C^\alpha$ of functions:
\[
C^\alpha=\{f\in C_b(I;X):[f]_{C^\alpha(I;X)}:=\sup_{\substack{t,s \in I\\ |t-s|<\delta}}\frac{||f(t)-f(s)||}{|t-s|^\alpha}<\infty \}.
\]
The little H\"{o}lder space of functions are then
\begin{eqnarray*}
\mathfrak{h}^\alpha &=& \{f\in C^\alpha(I;X): \lim_{\delta \to 0}\sup_{\substack{t,s \in I\\ |t-s|<\delta}}\frac{||f(t)-f(s)||}{|t-s|^\alpha}=0 \},\\
\mathfrak{h}^{k+\alpha}&=& \{ f\in C^k_b(I;X): f^{(k)}\in \mathfrak{h}^\alpha(I;X) \}.
\end{eqnarray*}
We can extend this definition to the space of 1-forms on a compact manifold. Let $\mathcal{M}$ be a compact Riemannian manifold.  Fix a background metric $\hat{g}$ and a finite atlas $\{U_{\upsilon
}\}_{1\leq\upsilon\leq\Upsilon}$ of coordinate charts covering $\mathcal{M}$. For each $k\in\mathbb{N}$ and $\alpha\in(0,1]$, let $\mathfrak{h}^{k+\alpha}$ denote the little H\"{o}lder space of 1-forms
with norm $\left\Vert \cdot\right\Vert _{k+\alpha}$ derived from
\[
\left\Vert a\right\Vert _{0+\alpha}:=\max_{\substack{1\leq i\leq
n\\1\leq\upsilon\leq\Upsilon}}\sup_{x,y\in U_{\upsilon}}\frac{\left\vert
a_{i}(x)-a_{i}(y)\right\vert }{(d_{\hat{g}}(x,y))^{\alpha}}.
\]
We state a few facts about these spaces. 

\begin{lemma}
For $j<k$ and $0<\beta<\alpha< 1$, $\mathfrak{h}^{k+\alpha}\hookrightarrow \mathfrak{h}^{j+\beta}$, and this inclusion is continuous and dense.
\end{lemma}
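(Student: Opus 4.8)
The plan is to reduce the statement to two standard facts: the elementary interpolation/embedding inequality $\|\cdot\|_{j+\beta}\le C\,\|\cdot\|_{k+\alpha}$ on a compact manifold, valid on smooth $1$-forms (and hence on all of $C^{k+\alpha}$), together with the observation that a bounded linear map carries the closure of a subspace into the closure of its image. Since $\mathfrak{h}^{k+\alpha}$ and $\mathfrak{h}^{j+\beta}$ are by definition the $\|\cdot\|_{k+\alpha}$- and $\|\cdot\|_{j+\beta}$-closures of $C^\infty$, the set inclusion, its continuity, and its density will all follow once the inequality on $C^\infty$ is in hand.

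First I would establish $\|a\|_{j+\beta}\le C\,\|a\|_{k+\alpha}$ for $a\in C^\infty$, working chart by chart in the fixed finite atlas $\{U_\upsilon\}_{1\le\upsilon\le\Upsilon}$ with background metric $\hat g$. The pure sup-norm part of $\|a\|_{j+\beta}$ (derivatives of components $a_i$ up to order $j$) is dominated by the corresponding part of $\|a\|_{k+\alpha}$ because $j\le k$. For the remaining Hölder seminorm, which involves $D^\gamma a_i$ with $|\gamma|=j\le k-1$ (using $j<k$ and integrality), I would use that such a component is $C^1$ with $\|D(D^\gamma a_i)\|_\infty\le\|a\|_{k+\alpha}$ together with the elementary estimate $|F(x)-F(y)|\le\min\{2\|F\|_\infty,\ \|DF\|_\infty\,d_{\hat g}(x,y)\}$, which yields $[F]_{C^\beta}\le C\,(\mathrm{diam}_{\hat g}\mathcal{M})^{1-\beta}(\|F\|_\infty+\|DF\|_\infty)$ for every $\beta\in(0,1)$; here compactness of $\mathcal{M}$ is precisely what makes the diameter and the atlas finite, so the constant is uniform, and smoothness of the transition functions makes the equivalence class of the norm independent of the chosen atlas. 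Summing over the finitely many charts and multi-indices then gives the claimed inequality.

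Then I would upgrade this to the full statement. For $a\in\mathfrak{h}^{k+\alpha}$ choose smooth $a_n$ with $\|a_n-a\|_{k+\alpha}\to0$; applying the inequality to $a_n-a_m$ shows $(a_n)$ is $\|\cdot\|_{j+\beta}$-Cauchy, hence convergent in $\mathfrak{h}^{j+\beta}$, and since both norms control the sup norm the limit must be $a$. Thus $a\in\mathfrak{h}^{j+\beta}$, and passing to the limit in $\|a_n\|_{j+\beta}\le C\,\|a_n\|_{k+\alpha}$ gives $\|a\|_{j+\beta}\le C\,\|a\|_{k+\alpha}$, which is continuity. For density, note $C^\infty\subset\mathfrak{h}^{k+\alpha}$ while $C^\infty$ is by definition $\|\cdot\|_{j+\beta}$-dense in $\mathfrak{h}^{j+\beta}$, so $\mathfrak{h}^{k+\alpha}$ is a fortiori dense there. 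There is no genuine obstacle in this argument: the only real work is bookkeeping with the coordinate-chart definition of the Hölder norms and the elementary Hölder interpolation inequality, both routine consequences of compactness and the triangle inequality, and either step could alternatively be cited from standard references on little Hölder spaces.
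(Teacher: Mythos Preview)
Your argument is correct. The continuity step via the elementary inequality $[F]_{C^\beta}\le C(\|F\|_\infty+\|DF\|_\infty)$ applied to $F=D^\gamma a_i$ with $|\gamma|=j\le k-1$ is sound, the Cauchy-sequence extension to all of $\mathfrak{h}^{k+\alpha}$ is handled cleanly, and the density observation---that $C^\infty$ already sits inside $\mathfrak{h}^{k+\alpha}$ and is by definition dense in $\mathfrak{h}^{j+\beta}$---is exactly the right way to close.

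As for comparison with the paper: the paper does not prove this lemma at all. It is listed among ``a few facts about these spaces'' and left unproved, treated as a standard property of little H\"older spaces (implicitly deferring to references such as Lunardi). So you have supplied a self-contained elementary proof where the paper simply cites the result. Your approach has the advantage of making the dependence on compactness of $\mathcal{M}$ and on the definition of $\mathfrak{h}^{k+\alpha}$ as a closure of $C^\infty$ completely explicit; the paper's approach has the advantage of brevity, since the result is indeed standard.
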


\begin{lemma}
\label{holder interpolation}
For $j\leq k \in \mathbb{N}$, $0<\beta <\alpha<1$, and $0<\theta<1$, if
\[\theta(k+\alpha)+(1-\theta)(j+\beta)\]
is not an integer, then there is a Banach space isomorphism
\begin{equation}
(\mathfrak{h}^{j+\beta},\mathfrak{h}^{k+\alpha})_\theta\cong \mathfrak{h}^{(\theta k +(1-\theta)j)+(\theta \alpha +(1-\theta)\beta)},
\end{equation}
and there exists $C<\infty$ such that for all $\eta \in \mathfrak{h}^{k+\alpha}$
\begin{equation}
||\eta ||_{(\mathfrak{h}^{j+\beta},\mathfrak{h}^{k+\alpha})_\theta}\leq C ||\eta||_{\mathfrak{h}_{j+\beta}}^{1-\theta}||\eta||_{\mathfrak{h}_{k+\alpha}}^{\theta}.
\end{equation}
Namely, these spaces form a continuous interpolation method.
\end{lemma}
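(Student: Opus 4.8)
The plan is to reduce the statement to the classical interpolation theory of H\"older spaces on Euclidean domains, and then to close the argument with the reiteration theorem for the continuous interpolation functor. First I would localize: using the finite atlas $\{U_\upsilon\}$ together with a subordinate smooth partition of unity, one constructs for each of the finitely many indices $m+\gamma$ appearing in the argument a bounded retraction $r\colon\mathfrak{h}^{m+\gamma}\to\prod_\upsilon\mathfrak{h}^{m+\gamma}(\mathbb{R}^n)$ and a bounded coretraction $e$ with $r\circ e=\mathrm{id}$. The only facts needed here are that multiplication by a fixed smooth compactly supported function is bounded on every H\"older and little H\"older space, and that the chart-defined norm is equivalent to $\|\cdot\|_{m+\gamma}$. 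Since every interpolation functor --- in particular the continuous one --- commutes with retractions, it then suffices to prove both claims for the scalar spaces $\mathfrak{h}^{m+\gamma}$ on $\mathbb{R}^n$.

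For the isomorphism I would fix an integer $N>k$, observe that $\mathfrak{h}^{0}$ is the space of continuous functions and $\mathfrak{h}^{N}=C^{N}$, and invoke the classical fact (see, e.g., \cite{Lu}) that the continuous interpolation functor satisfies $(\mathfrak{h}^{0},\mathfrak{h}^{N})_{\rho}\cong\mathfrak{h}^{\rho N}$ whenever $\rho\in(0,1)$ and $\rho N\notin\mathbb{N}$. Because $\alpha,\beta\in(0,1)$, neither $j+\beta$ nor $k+\alpha$ is an integer, so $\mathfrak{h}^{j+\beta}\cong(\mathfrak{h}^{0},\mathfrak{h}^{N})_{(j+\beta)/N}$ and $\mathfrak{h}^{k+\alpha}\cong(\mathfrak{h}^{0},\mathfrak{h}^{N})_{(k+\alpha)/N}$. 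The reiteration theorem for the continuous functor then gives $(\mathfrak{h}^{j+\beta},\mathfrak{h}^{k+\alpha})_{\theta}\cong(\mathfrak{h}^{0},\mathfrak{h}^{N})_{s/N}$ with $s=(1-\theta)(j+\beta)+\theta(k+\alpha)$, and since $s\notin\mathbb{N}$ by hypothesis this is $\mathfrak{h}^{s}=\mathfrak{h}^{(\theta k+(1-\theta)j)+(\theta\alpha+(1-\theta)\beta)}$. Transporting back through $r$ and $e$ yields the isomorphism on $\mathcal{M}$.

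The interpolation inequality is the elementary half, and I would prove it directly from the $K$-functional. By definition the norm on $(\mathfrak{h}^{j+\beta},\mathfrak{h}^{k+\alpha})_{\theta}$ is $\sup_{t>0}t^{-\theta}K(t,\cdot)$, where $K(t,\eta)=\inf\{\|\eta_{0}\|_{\mathfrak{h}^{j+\beta}}+t\|\eta_{1}\|_{\mathfrak{h}^{k+\alpha}}:\eta=\eta_{0}+\eta_{1}\}$. The decompositions $\eta=\eta+0$ and $\eta=0+\eta$ give $K(t,\eta)\le\min\{\|\eta\|_{\mathfrak{h}^{j+\beta}},\,t\|\eta\|_{\mathfrak{h}^{k+\alpha}}\}$, and optimizing over $t$ (the two terms balance at $t=\|\eta\|_{\mathfrak{h}^{j+\beta}}/\|\eta\|_{\mathfrak{h}^{k+\alpha}}$) gives
\[
\|\eta\|_{(\mathfrak{h}^{j+\beta},\mathfrak{h}^{k+\alpha})_{\theta}}\le\|\eta\|_{\mathfrak{h}^{j+\beta}}^{1-\theta}\,\|\eta\|_{\mathfrak{h}^{k+\alpha}}^{\theta},
\]
which is the asserted estimate (with $C=1$, up to the harmless normalization constant in the definition of the interpolation norm).

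I expect the main obstacle to be the bookkeeping around the classical identity $(\mathfrak{h}^{0},\mathfrak{h}^{N})_{\rho}\cong\mathfrak{h}^{\rho N}$ and the reiteration step. One must keep track of the fact that it is the \emph{continuous} interpolation functor that is in play: this is what produces little H\"older spaces rather than the genuine H\"older/Zygmund spaces, and it is precisely why the non-integer hypothesis on $s$ cannot be dropped. One also has to verify the hypotheses of the reiteration theorem for this functor. These points, along with the mapping properties of smooth cutoffs used in the localization, are standard and documented in the interpolation literature (e.g. \cite{Lu}), so in the write-up I would cite them rather than reprove them; the genuinely new content of the lemma is merely their combination.
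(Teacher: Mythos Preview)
The paper does not actually prove this lemma: it is introduced under the sentence ``We state a few facts about these spaces,'' and no argument is supplied. The result is treated as a standard property of little H\"older spaces, with the surrounding discussion implicitly deferring to \cite{Lu} for the analytic content.

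Your proposal is correct and is essentially the argument one would extract from the standard references. The localization-by-retraction step is the right way to pass from $\mathcal{M}$ to $\mathbb{R}^n$; the identification $(\mathfrak{h}^0,\mathfrak{h}^N)_\rho\cong\mathfrak{h}^{\rho N}$ for non-integer $\rho N$ together with reiteration is exactly how the isomorphism is established in \cite{Lu}; and your $K$-functional computation for the inequality is the general fact that any interpolation functor of exponent $\theta$ is of class $J_\theta$. One small point worth tightening in a write-up: the continuous interpolation space $(\cdot,\cdot)_\theta$ carries the same norm as the real space $(\cdot,\cdot)_{\theta,\infty}$ (it is the closure of the smaller space therein), so your $K$-functional bound, which is literally a bound on the $(\cdot,\cdot)_{\theta,\infty}$ norm, does transfer---but you should say this explicitly rather than leave it implicit. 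Otherwise the argument is complete and is precisely what the paper is silently citing.
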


Let $D_A(\theta)$ be the continuous interpolation spaces.  We can show that for certain choices of $D(A)$ and $\mathbb{X}$ these in fact are the little H\"{o}lder spaces.  Suppose $A$ is given by
\[\left\{\begin{array}{c}D(A)=\{u\in \cap_{p\geq 1}W^{2,p}_{loc}: u, Au \in C(\mathbb{R}^n)\}\\A:D(A)\mapsto C(\mathbb{R}^n)\end{array} \right.
\]
\begin{proposition}
Let $0<\theta<1$.  Then
\[D_A(\theta)=\mathfrak{h}^{2\theta}, \mbox{  if  } \theta \neq \frac{1}{2}.
\]
\end{proposition}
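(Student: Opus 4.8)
The plan is to derive this as the classical identification of the continuous interpolation spaces between $C(\mathbb{R}^n)$ and the domain of the Laplacian with the little H\"older spaces (in the spirit of Da Prato--Grisvard, Sinestrari, and Lunardi), by a semigroup-based argument. Here $A$ is the realization in $\mathbb{X}:=C(\mathbb{R}^n)$ of the Laplacian $\Delta$; it generates the analytic Gauss--Weierstrass semigroup $e^{tA}$ with heat kernel $k_t$, and I will use repeatedly that $\int_{\mathbb{R}^n}k_t\,dw=1$, that $\int_{\mathbb{R}^n}\partial_tk_t\,dw=\int_{\mathbb{R}^n}\nabla k_t\,dw=0$ with $k_t(-w)=k_t(w)$, and that $\|D^\ell k_t\|_{L^1(\mathbb{R}^n)}\le c_\ell\,t^{-\ell/2}$ for every $\ell\ge0$ and $t>0$ (all by explicit Gaussian scaling). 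The first step is to pass to the standard semigroup description of the two spaces: the real interpolation space is
\[
D_A(\theta,\infty)=\bigl\{f\in\mathbb{X}:\ [f]:=\sup_{0<t\le1}t^{1-\theta}\|Ae^{tA}f\|_\infty<\infty\bigr\},
\]
with norm $\|f\|_\infty+[f]$, and the continuous interpolation space $D_A(\theta)=(\mathbb{X},D(A))_\theta$ is the closure of $D(A)$ in $D_A(\theta,\infty)$, equivalently the closed subspace of those $f$ with $\lim_{t\to0^+}t^{1-\theta}\|Ae^{tA}f\|_\infty=0$. So it suffices to identify these with $C^{2\theta}$ and its little-H\"older subspace $\mathfrak{h}^{2\theta}$.

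For $0<2\theta<1$ I would establish the two-sided bound $[f]\asymp[f]_{C^{2\theta}}$. The forward direction is quick: since $\int_{\mathbb{R}^n}\partial_tk_t\,dw=0$ one has $Ae^{tA}f(x)=\int_{\mathbb{R}^n}\partial_tk_t(w)\bigl(f(x-w)-f(x)\bigr)\,dw$, so $\|Ae^{tA}f\|_\infty\le[f]_{C^{2\theta}}\int_{\mathbb{R}^n}|w|^{2\theta}|\partial_tk_t(w)|\,dw\le c\,[f]_{C^{2\theta}}\,t^{\theta-1}$, giving $[f]<\infty$. For the converse, from $f-e^{tA}f=-\int_0^tAe^{sA}f\,ds$ I get $\|e^{tA}f-f\|_\infty\le[f]\int_0^ts^{\theta-1}\,ds=\theta^{-1}[f]\,t^\theta$; I then bound $\|\nabla e^{tA}f\|_\infty$ by a dyadic telescoping argument, using $\nabla e^{tA}f-\nabla e^{2tA}f=-\int_t^{2t}\nabla Ae^{sA}f\,ds$ together with $\|\nabla Ae^{sA}f\|_\infty\le c\,s^{-1/2}\|Ae^{(s/2)A}f\|_\infty\le c\,s^{\theta-3/2}$ and the fact that $\|\nabla e^{TA}f\|_\infty\to0$ as $T\to\infty$, which sums (the geometric series converging precisely because $\theta<\tfrac12$) to $\|\nabla e^{tA}f\|_\infty\le c\,[f]\,t^{\theta-1/2}$. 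Finally, splitting $f(x)-f(y)=(f(x)-e^{tA}f(x))+(e^{tA}f(x)-e^{tA}f(y))+(e^{tA}f(y)-f(y))$ with $t=|x-y|^2$ and estimating the middle term by $|x-y|\,\|\nabla e^{tA}f\|_\infty$ yields $|f(x)-f(y)|\le c\,[f]\,|x-y|^{2\theta}$. I expect this estimate for $\|\nabla e^{tA}f\|_\infty$ to be the main point: it is what breaks the circularity of bounding a H\"older seminorm that has not yet been shown to be finite.

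For $1<2\theta<2$ (i.e. $\tfrac12<\theta<1$) the target $C^{2\theta}$ is the space of bounded $C^1$ functions whose first derivatives are $(2\theta-1)$-H\"older, and I would run the same scheme one order higher: in the forward direction use $k_t(-w)=k_t(w)$ and $\int_{\mathbb{R}^n}\partial_tk_t\,dw=0$ to write $Ae^{tA}f(x)=\tfrac12\int_{\mathbb{R}^n}\partial_tk_t(w)\bigl(f(x+w)+f(x-w)-2f(x)\bigr)\,dw$ and estimate the second difference by $c\,[\nabla f]_{C^{2\theta-1}}|w|^{2\theta}$; in the converse direction estimate $\|D^2e^{tA}f\|_\infty$ in place of $\|\nabla e^{tA}f\|_\infty$ (equivalently, differentiate once and apply the previous case to $\nabla f$ with exponent $2\theta-1$). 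This is exactly where $\theta\ne\tfrac12$ is needed: at $2\theta=1$ one obtains the Zygmund class rather than $\mathrm{Lip}$, so the identification with a H\"older space fails. The last step is to pass from $D_A(\theta,\infty)=C^{2\theta}$ to $D_A(\theta)=\mathfrak{h}^{2\theta}$: both are the closure of $D(A)$ (equivalently of $C_b^\infty$, by mollification) in the $C^{2\theta}$-norm, which for $D_A(\theta)$ is the definition of the continuous interpolation space, while for $\mathfrak{h}^{2\theta}$ one uses the characterization $\lim_{h\to0^+}\sup_{0<|x-y|\le h}|f(x)-f(y)|/|x-y|^{2\theta}=0$ and matches it with $\lim_{t\to0^+}t^{1-\theta}\|Ae^{tA}f\|_\infty=0$ by localizing the kernel integrals in the two-sided estimate above (the tail contributions being uniformly small). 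Since the entire argument is local with constants uniform over coordinate charts, the same computation combined with a partition of unity gives the corresponding identification for the little H\"older spaces of $1$-forms on the compact manifold used in the following section.
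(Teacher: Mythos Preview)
Your proposal is correct and is precisely the semigroup/heat-kernel argument that constitutes the proof of Theorem~3.1.12 in Lunardi's monograph, which is all the paper's proof consists of (a bare citation). In fact you go slightly further than the paper does, since your closing remark about transferring the identification to sections over a compact manifold via a partition of unity supplies a step the paper uses tacitly in the next section but never justifies.
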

\begin{proof}
See the proof of Theorem 3.1.12 in \cite{Lu}.  
\end{proof}

\section{Application of the Center Manifold Theorem to the NGRYM}
We would like to show that we can apply Theorems \ref{simonett} and \ref{simonett 2} to NGRYM.  In order to put our analysis into this framework, we need to define appropriate spaces.

Fix $0<\delta<\epsilon<1$.  We would like the consider the following hierarchy of Banach spaces 
\[\mathbb{X}_1\subset \mathbb{E}_1 \subset \mathbb{X}_0 \subset \mathbb{E}_0,
\]
where 
\begin{equation*}
\begin{array}[c]{l}
\mathbb{X}_1 = \{(u,a): u \in \mathfrak{h}^{2+\epsilon}, a \in \mathfrak{h}^{2+\epsilon} \},\\
\mathbb{E}_1 = \{(u,a): u \in \mathfrak{h}^{2+\delta}, a \in \mathfrak{h}^{2+\delta} \},\\
\mathbb{X}_0= \{(u,a): u \in \mathfrak{h}^{\epsilon}, a \in \mathfrak{h}^{\epsilon} \},\\
\mathbb{E}_0=\{ (u,a): u \in \mathfrak{h}^{\delta}, a \in \mathfrak{h}^{\delta} \}.
\end{array}
\end{equation*}
Notice that in the notation above, we mean $(u,a)\in \mathfrak{h}^{\cdot}(M)\times \mathfrak{h}^{\cdot}(\Omega^1(M))$.  We would like to check that Assumption (1) holds.  That is, we want to show that $X_0=(E_0, E_1)_\theta$ and $X_1=(E_0, E_1)_{1+\theta}$.  Clearly the product of interpolation spaces is also an interpolation space.  Notice that if $\theta=\frac{\epsilon-\delta}{2}$, then $\mathfrak{h}^{2+\epsilon}=(\mathfrak{h}^{2+\delta},\mathfrak{h}^{\delta})_{1+\theta}$ and $\mathfrak{h}^\epsilon=(\mathfrak{h}^{2+\delta},\mathfrak{h}^{\delta})_\theta$ by Lemma \ref{holder interpolation}.  

Let $\mathbb{X}_\beta=(\mathbb{X}_0,\mathbb{X}_1)_\beta$ and $\mathbb{X}_\alpha=(\mathbb{X}_0,\mathbb{X}_1)_\alpha$ for fixed $\frac{1}{2} \leq \beta <\alpha <1$.  We would like to check that $\mathbb{E}_1 \hookrightarrow \mathbb{X}_\beta \hookrightarrow \mathbb{E}_0$ is a continuous and dense inclusion and that the interpolation inequality holds.  This will fulfill Assumption (2) of Theorem \ref{simonett}.  Notice that $X_0$ and $X_1$ can be gotten from interpolating between $E_0$ and $E_1$.  If $\theta = \frac{\epsilon-\delta}{2}$, then by lemma \ref{holder interpolation},
\begin{equation}
\label{X_0 interp}
\mathbb{X}_0=(\mathbb{E}_0,\mathbb{E}_1)_\theta,
\end{equation}
\begin{equation}
\label{X_1 interp}
\mathbb{X}_1=(\mathbb{E}_0,\mathbb{E}_1)_{1+\theta}.
\end{equation}  

\begin{lemma}
There exists $C>0$, $\rho \in (0,1)$, and  $\beta \in (\frac{1}{2},1)$ such that $\mathbb{E}_1 \hookrightarrow \mathbb{X}_\beta \hookrightarrow \mathbb{E}_0$ and 
\[||\eta||_{\mathbb{X}_\beta}\leq C ||\eta||_{\mathbb{E}_0}^{1-\rho}
||\eta||_{\mathbb{E}_1}^{\rho},\]
for all $\eta \in \mathbb{X}_\beta$.
\end{lemma}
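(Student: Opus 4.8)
The plan is to pin down $\mathbb{X}_\beta$ explicitly as a little Hölder space and then recognize it as a continuous interpolation space between $\mathbb{E}_0$ and $\mathbb{E}_1$; once that identification is in hand, both the embedding chain and the interpolation inequality drop out of Lemma \ref{holder interpolation} and the embedding lemma for little Hölder spaces.

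First I would compute $\mathbb{X}_\beta$. Since $\mathbb{X}_0=\mathfrak{h}^{\epsilon}\times\mathfrak{h}^{\epsilon}$ and $\mathbb{X}_1=\mathfrak{h}^{2+\epsilon}\times\mathfrak{h}^{2+\epsilon}$, applying Lemma \ref{holder interpolation} in each component (and using that a product of interpolation spaces is again one) gives $\mathbb{X}_\beta=(\mathbb{X}_0,\mathbb{X}_1)_\beta=\mathfrak{h}^{2\beta+\epsilon}\times\mathfrak{h}^{2\beta+\epsilon}$ as long as $2\beta+\epsilon\notin\mathbb{Z}$. I would therefore fix $\beta\in(\tfrac12,\,1-\tfrac{\epsilon}{2})$ — a nonempty interval because $\epsilon<1$ — so that $2\beta+\epsilon\in(1+\epsilon,2)\subset(1,2)$ is automatically non-integral, and then choose any $\alpha\in(\beta,1)$. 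Writing $\theta=\tfrac{\epsilon-\delta}{2}$ as before and setting $\rho:=\beta+\theta$, one has $\rho>\tfrac12$ and $\rho<1-\tfrac{\epsilon}{2}+\theta=1-\tfrac{\delta}{2}<1$, while $2\rho+\delta=2\beta+\epsilon\notin\mathbb{Z}$. Hence a second application of Lemma \ref{holder interpolation}, now interpolating $\mathbb{E}_0=\mathfrak{h}^{\delta}\times\mathfrak{h}^{\delta}$ and $\mathbb{E}_1=\mathfrak{h}^{2+\delta}\times\mathfrak{h}^{2+\delta}$ with parameter $\rho$, yields the identification
\[\mathbb{X}_\beta=\mathfrak{h}^{2\beta+\epsilon}\times\mathfrak{h}^{2\beta+\epsilon}=(\mathbb{E}_0,\mathbb{E}_1)_\rho\]
together with the estimate $\|\eta\|_{\mathbb{X}_\beta}\le C\,\|\eta\|_{\mathbb{E}_0}^{1-\rho}\,\|\eta\|_{\mathbb{E}_1}^{\rho}$, which is precisely the asserted interpolation inequality with this value of $\rho$. (The same identification $\mathbb{X}_\beta=(\mathbb{E}_0,\mathbb{E}_1)_{\theta+\beta}$ also follows by reiteration from $\mathbb{X}_0=(\mathbb{E}_0,\mathbb{E}_1)_\theta$ and $\mathbb{X}_1=(\mathbb{E}_0,\mathbb{E}_1)_{1+\theta}$, since $(1-\beta)\theta+\beta(1+\theta)=\theta+\beta$.) For the inclusions, the ordering $2+\delta>2\beta+\epsilon>\delta$ of total exponents gives continuous dense embeddings $\mathbb{E}_1\hookrightarrow\mathbb{X}_\beta\hookrightarrow\mathbb{E}_0$; equivalently, these are the standard embeddings $\mathbb{E}_1\hookrightarrow(\mathbb{E}_0,\mathbb{E}_1)_\rho\hookrightarrow\mathbb{E}_0$, valid whenever $\mathbb{E}_1\hookrightarrow\mathbb{E}_0$ is continuous and dense.

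The only delicate point is the arithmetic bookkeeping, not the analysis: one must exhibit a single $\beta$ meeting all the constraints at once — $\beta\ge\tfrac12$ (imposed in the earlier set-up), $\beta<\alpha<1$, $\beta<1-\theta$ (so that $\rho<1$ and $\mathbb{E}_1\hookrightarrow\mathbb{X}_\beta$), and $2\beta+\epsilon\notin\mathbb{Z}$. This is where the hypothesis $0<\delta<\epsilon<1$ is genuinely used: it forces $\theta=\tfrac{\epsilon-\delta}{2}<\tfrac12$, so $[\tfrac12,1-\theta)$ is nonempty, and — since $1-\tfrac{\epsilon}{2}<1-\theta$ — the sub-interval $(\tfrac12,1-\tfrac{\epsilon}{2})$ remains available and keeps $2\beta+\epsilon$ safely inside $(1,2)$. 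Everything else is a routine consequence of Lemma \ref{holder interpolation} and the embedding lemma.
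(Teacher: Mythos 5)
Your proposal is correct, and it reaches the same key identity $\rho=\beta+\theta$ via a slightly different mechanism than the paper. The paper proves the inequality by chaining norm estimates: it writes $\|\eta\|_{\mathbb{X}_\beta}\leq C\|\eta\|_{\mathbb{X}_0}^{1-\beta}\|\eta\|_{\mathbb{X}_1}^{\beta}$ and then substitutes bounds for $\|\eta\|_{\mathbb{X}_0}$ and $\|\eta\|_{\mathbb{X}_1}$ in terms of the $\mathbb{E}_0$- and $\mathbb{E}_1$-norms, using $\mathbb{X}_0=(\mathbb{E}_0,\mathbb{E}_1)_\theta$ and $\mathbb{X}_1=(\mathbb{E}_0,\mathbb{E}_1)_{1+\theta}$, finally choosing $\beta\in(\tfrac12,1-\theta)$ so that $\rho=\theta+\beta<1$. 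You instead identify $\mathbb{X}_\beta$ outright, first as $\mathfrak{h}^{2\beta+\epsilon}\times\mathfrak{h}^{2\beta+\epsilon}$ and then as $(\mathbb{E}_0,\mathbb{E}_1)_{\beta+\theta}$ (equivalently by reiteration), and apply Lemma \ref{holder interpolation} once; the embeddings then come from the ordering $\delta<2\beta+\epsilon<2+\delta$. This buys you something concrete: the paper's intermediate factor $\|\eta\|_{\mathbb{X}_1}\leq C\|\eta\|_{\mathbb{E}_0}^{-\theta}\|\eta\|_{\mathbb{E}_1}^{1+\theta}$ is an extrapolation-type estimate with a negative exponent, and since $\mathfrak{h}^{2+\epsilon}$ is strictly smaller than $\mathfrak{h}^{2+\delta}$ it is not a valid standalone inequality (a low-frequency bump plus a small high-frequency perturbation violates it), so your one-step identification is actually the cleaner justification of the same (correct) conclusion. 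Your extra bookkeeping, restricting $\beta\in(\tfrac12,1-\tfrac{\epsilon}{2})$ so that $2\beta+\epsilon\notin\mathbb{Z}$, is needed precisely because you invoke the explicit H\"older-space identification, and it is handled correctly; it also automatically enforces $\beta<1-\theta$, matching the paper's choice. The only cosmetic caveat, common to both arguments, is that the stated inequality has content only for $\eta\in\mathbb{E}_1$ (otherwise $\|\eta\|_{\mathbb{E}_1}$ is not finite), which is exactly the form required in Assumption (2).
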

\begin{proof}
Since $\beta \in (\frac{1}{2},1)$, it is clear that the inclusions hold.  Using equations (\ref{X_0 interp}) and (\ref{X_1 interp}),we have
\begin{eqnarray*}
||\eta||_{\mathbb{X}_\beta}&\leq& C ||\eta||_{\mathbb{X}_0}^{1-\beta}||\eta||_{\mathbb{X}_1}^{\beta}\\
&\leq&C(||\eta||_{\mathbb{E}_0}^{1-\theta}||\eta||_{\mathbb{E}_1}^{\theta})^{1-\beta}(||\eta||_{\mathbb{E}_0}^{1-(1+\theta)}||\eta||_{\mathbb{E}_1}^{1+\theta})^{\beta}\\
&\leq&C||\eta||_{\mathbb{E}_0}^{1-\rho}||\eta||_{\mathbb{E}_1}^{\rho},
\end{eqnarray*} 
where $\rho =\theta+\beta$.  It remains only to check that $\rho \in (0,1)$.  Since $0<\delta<\epsilon<1$, $0<\theta<\frac{1}{2}$, we simply choose $\beta$ to be in $(\frac{1}{2},1-\theta)$.
\end{proof}

Since we would like to use Theorem \ref{simonett}, we want to make our notation match that of Simonett.  In other words, we would like to write equation (\ref{ngrym}) as 
\[\partial_t \vec{x}=A(\vec{x})\vec{x}.
\]
In a fixed coordinate system, we can write the right-hand side of equation (\ref{ngrym}) as
\begin{equation}
\label{A}
A(u,a)(u,a)=\left( \begin{array}{c}a(x,g)^{ij}\partial_i\partial_j u+b(x,g,\partial g)^k\partial_k u +c(x,g,da)u \\d(x,g)^{ij}\partial_i\partial_j a_k+e(x,g,\partial g)^i\partial_i a_k-f(x)a_k\end{array} \right), 
\end{equation}
where the functions $a(x,\cdot), b(x,\cdot, \cdot), c(x,\cdot,\cdot), d(x,\cdot), e(x,\cdot,\cdot), f(x)$ depend smoothly on $x\in M$.  They are analytic functions of their remaining arguments.  

We want to show that for all $(u,a)$ in a certain open set, $A(u,a)$ is a bounded map from $\mathbb{X}_1$ to $\mathbb{X}_0$.  Since $\mathbb{X}_1$ is a dense subspace of $\mathbb{E}_1$, we have an extension operator $\tilde{A}$.  We will show also that $\tilde{A}:\mathbb{E}_1\to\mathbb{E}_0$ is a bounded operator.  For a fixed $r>0$, we define the open subsets $\mathcal{U}_\beta \subset \mathbb{X}_\beta$ and $\mathcal{U}_\alpha \subset \mathbb{X}_\alpha$ to be 
\[\mathcal{U}_\beta := \{(u,a):||u||_{\mathbb{X}_\beta}>r, ||a||_{\mathcal{X}_\beta}>r \},
\]
\[\mathcal{U}_\alpha:= \mathcal{U}_\beta \cap \mathbb{X}_\alpha.
\]

\begin{lemma}
\label{bounded operator}
For $(u,a)\in \mathcal{U}_\beta$, $A(u,a)\in L(\mathbb{X}_1,\mathbb{X}_0)$.  Also for $(u,a)\in \mathcal{U}_\alpha$, $\tilde{A}(u,a)\in L(\mathbb{E}_1,\mathbb{E}_0)$.
\end{lemma}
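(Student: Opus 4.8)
The plan is to verify the two boundedness claims by examining the explicit form of $A(u,a)$ given in equation (\ref{A}) coefficient-by-coefficient, using the little H\"older space structure and the pointwise multiplication properties of these spaces. First I would record the key algebraic fact that little H\"older spaces form a Banach algebra under pointwise multiplication: for $k\geq 0$ and $0<\gamma<1$, the product of two elements of $\mathfrak{h}^{k+\gamma}$ lies in $\mathfrak{h}^{k+\gamma}$, with $\|fg\|_{k+\gamma}\leq C\|f\|_{k+\gamma}\|g\|_{k+\gamma}$, and more generally that multiplication by a fixed $\mathfrak{h}^{k+\gamma}$ function is bounded on $\mathfrak{h}^{j+\gamma}$ for $j\leq k$. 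I would also note that composition with a fixed analytic (or smooth) function preserves $\mathfrak{h}^{k+\gamma}$. These are standard facts about little H\"older spaces and can be cited from \cite{Lu}.

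The main step is then a term-by-term estimate. Fix $(u,a)\in\mathcal{U}_\beta$ and take an arbitrary $(w,c)\in\mathbb{X}_1$, so $w\in\mathfrak{h}^{2+\epsilon}$ and $c\in\mathfrak{h}^{2+\epsilon}(\Omega^1)$; I must show $A(u,a)(w,c)\in\mathbb{X}_0$, i.e.\ each component lies in $\mathfrak{h}^{\epsilon}$. Since $(u,a)\in\mathcal{U}_\beta\subset\mathbb{X}_\beta$ with $\beta\geq\frac12$, we have $u\in\mathfrak{h}^{2\beta+\epsilon'}$ for suitable exponent, in particular $u\in\mathfrak{h}^{1+\gamma}$ for some $\gamma$, so $\partial g$, which involves $\partial u$, lies in $\mathfrak{h}^{\gamma}\subset\mathfrak{h}^{\epsilon}$ after possibly shrinking, and the metric $g=e^u h$ and its inverse lie in at least $\mathfrak{h}^{\epsilon}$. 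Now: the coefficient $a(x,g)^{ij}$ is an analytic function of $g$ composed with the smooth dependence on $x$, hence lies in $\mathfrak{h}^{\epsilon}$; multiplying it by $\partial_i\partial_j w\in\mathfrak{h}^{\epsilon}$ stays in $\mathfrak{h}^{\epsilon}$ by the algebra property. The term $b(x,g,\partial g)^k\partial_k w$ has $b$ in $\mathfrak{h}^{\epsilon}$ (it involves $\partial g$) and $\partial_k w\in\mathfrak{h}^{1+\epsilon}\subset\mathfrak{h}^{\epsilon}$. The zeroth-order term $c(x,g,da)u$ — here $da$ involves one derivative of $a$, so $da\in\mathfrak{h}^{1+\gamma}\subset\mathfrak{h}^{\epsilon}$, and $u\in\mathfrak{h}^{\epsilon}$ — lies in $\mathfrak{h}^{\epsilon}$; and similarly for $d,e,f$ acting on the $a_k$-component, noting $f(x)$ depends only on $x$ hence is smooth. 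Collecting the estimates and using that all these operations are bilinear and continuous gives a bound $\|A(u,a)(w,c)\|_{\mathbb{X}_0}\leq C(\|(u,a)\|_{\mathbb{X}_\beta})\,\|(w,c)\|_{\mathbb{X}_1}$, which is precisely $A(u,a)\in L(\mathbb{X}_1,\mathbb{X}_0)$. The argument for $\tilde{A}(u,a)\in L(\mathbb{E}_1,\mathbb{E}_0)$ is identical, replacing $\epsilon$ by $\delta$ and $\beta$ by $\alpha$ throughout, and observing that the same formula (\ref{A}) defines the extension: the coefficients make sense and the multiplication estimates hold verbatim in the $\delta$-scale, which shows $\tilde{A}$ is well-defined on $\mathbb{E}_1$ and extends $A$.

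The step I expect to be the main obstacle is the careful bookkeeping of derivative counts: I must confirm that for $(u,a)$ merely in $\mathbb{X}_\beta$ (not in $\mathbb{X}_1$), the first derivatives $\partial u$, $da$ appearing inside the coefficients $b$, $c$, $e$ genuinely land in a H\"older class of positive exponent at least $\epsilon$ (respectively $\delta$) — this is where the choice $\beta\geq\frac12$ is essential, since $\mathbb{X}_\beta$ corresponds to $\mathfrak{h}^{2\beta+\epsilon(1-2\beta)/\dots}$-type regularity and one needs $2\beta>1$ strictly, or $\beta=\frac12$ handled as a limiting case, so that one full derivative can be absorbed with room to spare. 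A secondary technical point is justifying that composition with the analytic functions $a(x,\cdot)$, $d(x,\cdot)$, etc.\ maps H\"older functions into H\"older functions with the right exponent and with locally bounded operator norm on the relevant open set; this follows from the chain rule and the algebra property but should be stated. Once these two regularity accountings are in place, the rest is the routine bilinear estimate sketched above.
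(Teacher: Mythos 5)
Your proposal is correct and is essentially the argument the paper invokes: the paper's proof simply defers to Lemma 3.3 of \cite{GIK}, whose proof is exactly this coefficient-by-coefficient estimate using the Banach-algebra and composition properties of little H\"older spaces, with the regularity bookkeeping for $\partial g$ and $da$ coming from $(u,a)\in\mathbb{X}_\beta$ with $\beta\geq\tfrac12$ (and the same formula defining the extension $\tilde{A}$ on the $\delta$-scale). You have in effect written out the details the paper outsources, so no substantive difference in approach.
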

\begin{proof}
The proof follows in almost exactly the same way as the proof of Lemma 3.3 in \cite{GIK}.  \end{proof}

Let $X$ be a Banach space.  A semigroup $S(t)\subset L(X)$ is said to be analytic if $t\mapsto S(t)$ is an analytic map for all $t\in (0,\infty)$.  Additionally, $S(t)$ is strongly continuous if $t\mapsto S(t)x$ is continuous on $[0,\infty)$ for all $x\in X$.  In order to satisfy Assumption (3) and to show that $\tilde{A}$ generates a strongly continuous analytic semigroup on $L(\mathbb{E}_0)$, we need the following lemma and definition from \cite{Lu}.
\begin{lemma}
\label{semigroup}
$A:D(A)\subset X \to X$ generates a strongly continuous analytic semigroup if $A$ is sectorial and the domain $D(A)$ is dense in $X$.
\end{lemma}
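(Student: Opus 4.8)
The plan is to follow the classical construction of the analytic semigroup generated by a sectorial operator, as carried out in Chapter 2 of \cite{Lu}. First I would recall the hypothesis: $A$ being sectorial means there are constants $\omega \in \mathbb{R}$, $\eta \in (\pi/2,\pi)$, and $M>0$ such that the resolvent set $\rho(A)$ contains the sector $S_{\eta,\omega}=\{\lambda \in \mathbb{C}: \lambda \neq \omega,\ |\arg(\lambda-\omega)|<\eta\}$ and $\|(\lambda-A)^{-1}\|_{L(X)} \leq M/|\lambda-\omega|$ there. Given this, I define $S(0)=I$ and, for $t>0$,
\[
S(t)=\frac{1}{2\pi i}\int_{\gamma_{r,\phi}} e^{t\lambda}(\lambda-A)^{-1}\,d\lambda ,
\]
where $\gamma_{r,\phi}$ is the positively oriented boundary of $S_{\phi,\omega}\setminus\{|\lambda-\omega|\leq r\}$ for some $\pi/2<\phi<\eta$ and $r>0$. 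The resolvent bound forces exponential decay of the integrand along the two rays of $\gamma_{r,\phi}$, so the integral converges absolutely in $L(X)$ and, by Cauchy's theorem, is independent of the admissible $r$ and $\phi$.

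Next I would verify the algebraic and regularity properties that do not involve the domain. Differentiating under the integral sign (the differentiated integrals converging by the same estimate) shows $t\mapsto S(t)$ extends analytically to a sector in $\mathbb{C}$ about $(0,\infty)$, and also gives $S'(t)=AS(t)$ together with $S(t)X\subset D(A)$ for $t>0$. The estimate $\|S(t)\|_{L(X)}\leq C e^{\omega t}$ for $t>0$ follows by splitting $\gamma_{r,\phi}$ into its two rays and its arc and rescaling via $\lambda=\omega+\zeta/t$ on the rays. The semigroup law $S(t+s)=S(t)S(s)$ is obtained by expressing the product as a double contour integral over two nested contours, applying the resolvent identity $(\lambda-A)^{-1}(\mu-A)^{-1}=\big[(\lambda-A)^{-1}-(\mu-A)^{-1}\big]/(\mu-\lambda)$, and evaluating the two scalar integrals $\int e^{t\lambda}/(\mu-\lambda)\,d\lambda$ and $\int e^{s\mu}/(\mu-\lambda)\,d\mu$ by Cauchy's theorem; one of them vanishes and the other reproduces $e^{(t+s)\lambda}$.

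Finally — and this is the one step where density of $D(A)$ is genuinely used, hence the main obstacle — I would prove strong continuity at $t=0$. For $x\in D(A)$ one has the identity $S(t)x-x=\int_0^t S(\sigma)Ax\,d\sigma$, whose right-hand side tends to $0$ as $t\to 0^+$ because of the local bound $\sup_{0<t\leq 1}\|S(t)\|_{L(X)}<\infty$ just established; hence $S(t)x\to x$ on $D(A)$. Since $D(A)$ is dense in $X$ and $\{S(t)\}_{0<t\leq 1}$ is uniformly bounded, a standard $\varepsilon/3$ approximation argument upgrades this to $\lim_{t\to 0^+}S(t)x=x$ for every $x\in X$. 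Combined with $\|S(t)\|\leq Ce^{\omega t}$, this is precisely the assertion that $A$ generates a strongly continuous analytic semigroup; for the complete details I would refer the reader to the relevant propositions of Chapter 2 of \cite{Lu}.
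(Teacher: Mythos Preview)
Your sketch is correct and is precisely the classical Dunford-integral construction carried out in Chapter~2 of \cite{Lu}. The paper itself does not supply a proof of this lemma at all; it simply imports the statement from \cite{Lu} (the sentence preceding the lemma reads ``we need the following lemma and definition from \cite{Lu}''), so your proposal in fact fills in what the paper leaves as a citation, using the same source and the same argument that source contains.
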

\begin{definition}
\label{sectorial def}
$A$ is \emph{sectorial} if there exist constants $\omega \in \mathbb{R}, \theta \in (\frac{\pi}{2},\pi)$,and $M>0$ such that 
\begin{description}
\item[i.  ]$\rho(A) \supset S_{\theta,\omega}=\{\lambda \in \mathbb{C}:\lambda\neq \omega, |\arg(\lambda-\omega)|<\theta \},$
\item[ii.  ]$||R(\lambda,A)||_{L(X)}\leq\frac{M}{|\lambda-\omega|}, \forall \lambda \in S_{\theta,\omega}$.
\end{description}
\end{definition}

We see that we need to show that for $(u,a) \in \mathcal{U}_\alpha$, $\tilde{A}(u,a)$ is a sectorial operator in $\mathbb{E}_0$.  We would like to use the following Proposition from \cite{Lu}.
\begin{proposition}
\label{sectorial piece}
Let $A: D(A)\subset X\mapsto X$ be a linear operator, and let $\alpha \in (0,1)$.  Define $A_\alpha:D_A(\alpha+1)\mapsto D_A(\alpha)$ by $A_\alpha x:=Ax$.  In other words, $A_\alpha$ is the piece of $A$ defined on $D_A(\alpha)$.  Then $A_\alpha$ is a sectorial operator in $D_A(\alpha)$.
\end{proposition}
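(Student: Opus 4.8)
The plan is to check the two defining conditions of Definition \ref{sectorial def} directly for $A_\alpha$, exploiting that the resolvents of the (sectorial) operator $A$ restrict well to the continuous interpolation space $D_A(\alpha)$. Recall that $D_A(\alpha)$ carries the norm $\|x\|_{D_A(\alpha)}=\|x\|_X+[x]_\alpha$, where the seminorm $[x]_\alpha$ is equivalent to $\sup_{t>0}t^{1-\alpha}\|Ae^{tA}x\|_X$ (and also, up to constants, to $\sup_{\lambda\in S_{\theta,\omega}}|\lambda-\omega|^{1-\alpha}\|AR(\lambda,A)x\|_X$). The structural fact that drives everything is that for $\lambda\in S_{\theta,\omega}$ the resolvent $R(\lambda,A)$ commutes with $A$ on $D(A)$, hence with the semigroup $e^{tA}$; since it is moreover bounded on $X$, it preserves the defining condition of $D_A(\alpha)$ and therefore maps $D_A(\alpha)$ into itself. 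Combined with $R(\lambda,A)X\subset D(A)$ and the identity $AR(\lambda,A)y=\lambda R(\lambda,A)y-y$, this shows that $R(\lambda,A)$ actually maps $D_A(\alpha)$ into $D_A(\alpha+1)=\{x\in D(A):Ax\in D_A(\alpha)\}$, i.e. into $D(A_\alpha)$.

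Given this, I would first verify condition (i), namely $S_{\theta,\omega}\subset\rho(A_\alpha)$ with $R(\lambda,A_\alpha)=R(\lambda,A)\big|_{D_A(\alpha)}$. For $\lambda\in S_{\theta,\omega}$ and $y\in D_A(\alpha)$, set $x:=R(\lambda,A)y\in D(A_\alpha)$ by the above; then $(\lambda-A_\alpha)x=(\lambda-A)x=y$, so $\lambda-A_\alpha$ is onto $D_A(\alpha)$. It is also injective, since $(\lambda-A_\alpha)x=0$ forces $(\lambda-A)x=0$ in $X$, whence $x=0$. Thus $\lambda-A_\alpha\colon D(A_\alpha)\to D_A(\alpha)$ is a bijection whose inverse is the restriction of $R(\lambda,A)$; boundedness of this inverse is exactly condition (ii), which also yields closedness of $A_\alpha$ and the inclusion $S_{\theta,\omega}\subset\rho(A_\alpha)$.

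It remains to prove the resolvent estimate $\|R(\lambda,A)y\|_{D_A(\alpha)}\le\frac{M'}{|\lambda-\omega|}\|y\|_{D_A(\alpha)}$ for $\lambda\in S_{\theta,\omega}$, and this is the main point. The $X$-norm part is immediate from sectoriality of $A$: $\|R(\lambda,A)y\|_X\le\frac{M}{|\lambda-\omega|}\|y\|_X$. For the seminorm part one uses the $e^{tA}$-characterization together with the commutation $R(\lambda,A)Ae^{tA}=Ae^{tA}R(\lambda,A)$, so that $t^{1-\alpha}\|Ae^{tA}R(\lambda,A)y\|_X=t^{1-\alpha}\|R(\lambda,A)Ae^{tA}y\|_X\le\frac{M}{|\lambda-\omega|}\,t^{1-\alpha}\|Ae^{tA}y\|_X$; taking the supremum over $t>0$ gives $[R(\lambda,A)y]_\alpha\le\frac{M}{|\lambda-\omega|}[y]_\alpha$. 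Adding the two estimates produces condition (ii) with a constant $M'$ comparable to $M$, and the sector and vertex $(\theta,\omega)$ are inherited unchanged from $A$. I expect the delicate part to be the careful bookkeeping around the interpolation seminorm — verifying that the commutation identities hold on the relevant dense subsets and that the seminorm used is one of the admissible equivalent ones — but these verifications are carried out in full in Lunardi's monograph, so in the write-up we would cite the corresponding section of \cite{Lu} rather than reproduce them.
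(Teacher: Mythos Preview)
Your sketch is correct and is essentially the standard argument (resolvents of a sectorial $A$ commute with the semigroup, hence restrict to bounded operators on $D_A(\alpha)$ with the same sector and resolvent bound). Note, however, that the paper does not actually prove this proposition at all: it is simply quoted as a result from \cite{Lu} (introduced by ``We would like to use the following Proposition from \cite{Lu}''), so your proposal to cite Lunardi is precisely what the paper does, and your additional outline is extra detail the paper omits.
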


\begin{lemma}
\label{A sectorial}
For $(u,a)\in \mathcal{U}_\alpha$, $\tilde{A}(u,a):\mathbb{E}_1 \to \mathbb{E}_0$ is a sectorial operator on $\mathbb{E}_0$.
\end{lemma}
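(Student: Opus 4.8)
The plan is to realize $\tilde A(u,a)$ as the $\mathbb{E}_0$-part of a sectorial operator on the smaller base space, using the bundle structure $\mathbb{E}_0 = D_{\mathcal{A}}(\theta)$ that was fixed above via $\theta = \tfrac{\epsilon-\delta}{2}$. Concretely, for a fixed $(u,a)\in\mathcal{U}_\alpha$ the operator $\tilde A(u,a)$ is the linear differential operator displayed in (\ref{A}), with coefficients frozen at $(u,a)$; its principal part is $\mathrm{diag}(a(x,g)^{ij}\partial_i\partial_j,\, d(x,g)^{ij}\partial_i\partial_j)$ acting componentwise, and since $(u,a)\in\mathcal{U}_\alpha$ lies in a little-H\"older space of order $2+\alpha$ with $\alpha>\tfrac12$, the coefficients $a(x,g)^{ij}$ and $d(x,g)^{ij}$ are themselves in a little-H\"older class and — because $g=e^uh$ and $h$ is a fixed constant-curvature metric — are uniformly elliptic. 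The first step is therefore to record that this frozen-coefficient operator, considered with maximal domain in $\mathfrak{h}^{\delta}(M)\times\mathfrak{h}^{\delta}(\Omega^1(M))$, is sectorial: this is the standard little-H\"older Schauder theory for second-order uniformly elliptic operators on a closed manifold (the scalar case is Theorem 3.1.14 / Corollary 3.1.21 in \cite{Lu}, and the $1$-form component is the Hodge–de Rham type operator $\Delta_d$ plus lower-order terms, to which the same theory applies via the finite atlas and the frozen metric $\hat g$).

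Next I would invoke Proposition \ref{sectorial piece} with the role of $X$ played by the little-H\"older space $\mathfrak{h}^{\delta}$-type space and the role of $A$ played by that frozen elliptic operator. Since we have already identified, via Lemma \ref{holder interpolation} and the choice $\theta=\tfrac{\epsilon-\delta}{2}$, that $\mathbb{E}_0 \cong D_{A}(\theta)$ and $\mathbb{E}_1 \cong D_{A}(1+\theta)$ (these are exactly equations (\ref{X_0 interp}), (\ref{X_1 interp}) with $E$'s and $X$'s in the appropriate roles, together with Assumption (1)), Proposition \ref{sectorial piece} yields that the part $A_\theta: D_{A}(1+\theta)\to D_{A}(\theta)$, i.e. $\tilde A(u,a):\mathbb{E}_1\to\mathbb{E}_0$, is sectorial on $\mathbb{E}_0$. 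Combined with the density of $\mathbb{E}_1$ in $\mathbb{E}_0$ (Lemma on continuity and density of little-H\"older inclusions), this is precisely what Lemma \ref{semigroup} needs, so it simultaneously verifies Assumption (3).

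The one point requiring care — and the step I expect to be the main obstacle — is the passage from the \emph{principal} part to the \emph{full} operator $\tilde A(u,a)$, since the lower-order terms $b^k\partial_k$, $c$, $e^i\partial_i$, and the zeroth-order $-f(x)$ in (\ref{A}) have coefficients depending on $\partial g$, $da$, and the nonlocal averages $r,f$. The resolution is the usual perturbation argument: a first-order perturbation with coefficients in $\mathfrak{h}^{\delta}$ (respectively a bounded zeroth-order perturbation) is relatively bounded with respect to the sectorial principal part with relative bound zero, hence preserves sectoriality (Proposition 2.4.1–2.4.3 in \cite{Lu}); one must only check the coefficients genuinely lie in the right little-H\"older class, which follows because $(u,a)\in\mathcal{U}_\alpha\subset\mathbb{X}_\alpha$ with $\alpha>\tfrac12$ forces $\partial g, da \in \mathfrak{h}^{1+\alpha-1}\subset\mathfrak{h}^{\delta}$, and $r,f$ are constants. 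Having dispatched this, the conclusion $\tilde A(u,a)$ sectorial on $\mathbb{E}_0$ for every $(u,a)\in\mathcal{U}_\alpha$ follows, completing the verification of Assumption (3) for NGRYM. Details entirely parallel to Lemma 3.4 of \cite{GIK} and Section 3.1 of \cite{Lu} will be referenced rather than reproduced.
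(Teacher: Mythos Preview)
Your route is genuinely different from the paper's. The paper first invokes Proposition~\ref{sectorial piece} to conclude that $A(u,a):\mathbb{X}_1\to\mathbb{X}_0$ is sectorial on $\mathbb{X}_0$ (using $\mathbb{X}_0=D_{\tilde A}(\theta)$ from Assumption~(1)), and then passes the resolvent estimate from $\mathbb{X}_0$ down to $\mathbb{E}_0$ by a density argument, approximating $\eta\in\mathbb{E}_1$ by $\eta_i\in\mathbb{X}_1$. You instead aim to establish sectoriality on $\mathbb{E}_0$ directly, via little-H\"older Schauder theory for the principal part plus a lower-order perturbation argument. This direct route is standard and arguably cleaner; it avoids the density step entirely.

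However, your middle paragraph contains a genuine confusion of spaces. You claim that equations~(\ref{X_0 interp}), (\ref{X_1 interp}) together with Assumption~(1) give $\mathbb{E}_0\cong D_A(\theta)$ and $\mathbb{E}_1\cong D_A(1+\theta)$ for $\theta=\tfrac{\epsilon-\delta}{2}$. They do not: those equations say $\mathbb{X}_0=(\mathbb{E}_0,\mathbb{E}_1)_\theta$, and Assumption~(1) identifies $\mathbb{X}_0\cong D_{\tilde A}(\theta)$ where $\tilde A$ is the operator on $\mathbb{E}_0$. The roles of $\mathbb{X}$ and $\mathbb{E}$ are inverted in your statement, and no base space below $\mathbb{E}_0$ has been introduced that would make $\mathbb{E}_0$ an interpolation space of the form $D_A(\theta)$. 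Fortunately this step is redundant: once your first paragraph gives sectoriality of the principal part on $\mathfrak{h}^\delta\times\mathfrak{h}^\delta=\mathbb{E}_0$ via Lunardi's Schauder theory, and your third paragraph handles the lower-order terms by relative-boundedness, you are done without ever invoking Proposition~\ref{sectorial piece}. Delete the middle step (or, if you want to keep Proposition~\ref{sectorial piece} in play, apply it with base $X=C^0$ and $\theta'=\delta/2$, so that $D_A(\theta')=\mathfrak{h}^\delta=\mathbb{E}_0$ by the proposition preceding Section~4; note the correct exponent is $\delta/2$, not $(\epsilon-\delta)/2$).
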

\begin{proof}
In our case, there exists $\theta \in (0,1)$ such that $\mathbb{X}_0=D_{\tilde{A}}(\theta)$ and $\mathbb{X}_1=D_{\tilde{A}}(1+\theta)$.  Thus, by Proposition \ref{sectorial piece}, $A(u,a):\mathbb{X}_1\mapsto\mathbb{X}_0$ is a sectorial operator on $\mathbb{X}_0$. 

Now let $\eta \in \mathbb{E}_1$, and since $\mathbb{X}_1$ is densely embedded in $\mathbb{E}_1$, let $\{\eta_i\}\in \mathbb{X}_1$ such that $\eta_i\to \eta$ in $\mathbb{E}_1$.  Using the notation in Definition \ref{sectorial def}, fix $\lambda \in S_{\theta,\omega}$.  Then we have
\begin{align*}
||R(\lambda,A)\eta_i||_{\mathbb{E}_0}
&\leq c||R(\lambda,A)\eta_i||_{\mathbb{X}_0}\\
&\leq \frac{M}{|\gamma-\omega|}||\eta_i||_{\mathbb{X}_1}.
\end{align*}
Since the resolvent operator is continuous, and $A=\tilde{A}$ on the dense set $\mathbb{X}_1$, we can pass to the limit to obtain that $\tilde{A}(u,a)$ is sectorial in $\mathbb{E}_0$.
\end{proof}

\begin{lemma}
For $(u,a)\in \mathcal{U}_\alpha$, $\tilde{A}(u,a):\mathbb{E}_1 \to \mathbb{E}_0$ generates an analytic semigroup on $L(\mathbb{E}_0)$.
\end{lemma}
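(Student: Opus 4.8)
The plan is to obtain this as an immediate corollary of Lemma \ref{A sectorial} and Lemma \ref{semigroup}. Lemma \ref{A sectorial} already supplies the sectoriality of $\tilde{A}(u,a)\colon\mathbb{E}_1\to\mathbb{E}_0$ in the sense of Definition \ref{sectorial def}, so by Lemma \ref{semigroup} the only remaining hypothesis to verify is that the domain $D(\tilde{A}(u,a))=\mathbb{E}_1$ is dense in $\mathbb{E}_0$. Once that is established, Lemma \ref{semigroup} yields directly that $\tilde{A}(u,a)$ generates a strongly continuous analytic semigroup $S(t)\subset L(\mathbb{E}_0)$.

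First I would check the density of the inclusion $\mathbb{E}_1\hookrightarrow\mathbb{E}_0$. Since $\mathbb{E}_1=\mathfrak{h}^{2+\delta}(M)\times\mathfrak{h}^{2+\delta}(\Omega^1(M))$ and $\mathbb{E}_0=\mathfrak{h}^{\delta}(M)\times\mathfrak{h}^{\delta}(\Omega^1(M))$, this is a consequence of the continuous dense inclusion $\mathfrak{h}^{2+\delta}\hookrightarrow\mathfrak{h}^{\delta}$ of little H\"older spaces recorded in Section 3 (the embedding lemma stated just before Lemma \ref{holder interpolation}), together with the elementary fact that density is preserved under finite products. Alternatively, one can argue from equation (\ref{X_0 interp}): because $\mathbb{X}_0=(\mathbb{E}_0,\mathbb{E}_1)_\theta$ is a continuous interpolation space, $\mathbb{E}_1$ is dense in $\mathbb{X}_0$, hence a fortiori dense in $\mathbb{E}_0$.

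With density in hand, the conclusion follows by applying Lemma \ref{semigroup} with $X=\mathbb{E}_0$, $A=\tilde{A}(u,a)$, and $D(A)=\mathbb{E}_1$. The one point I would treat with care is that on a little H\"older space, which is not reflexive, strong continuity of the semigroup at $t=0$ is not automatic but is exactly what the density of the domain buys us --- this is the content of Lemma \ref{semigroup}, and it is the reason the density verification above cannot be skipped. Beyond this there is no substantive obstacle: the hard analytic work has already been carried out in establishing sectoriality in Lemma \ref{A sectorial}, and the present statement is essentially its formal translation into the language of semigroup generators.
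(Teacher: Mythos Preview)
Your proof is correct and follows essentially the same approach as the paper, which simply says the result follows from Lemmas \ref{semigroup} and \ref{A sectorial}. You are in fact more explicit than the paper in checking the density hypothesis $\mathbb{E}_1\hookrightarrow\mathbb{E}_0$ required by Lemma \ref{semigroup}, which the paper leaves implicit (relying on the little H\"older embedding lemma in Section 3).
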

\begin{proof}
This follows from Lemmas \ref{semigroup} and \ref{A sectorial}
\end{proof}

\subsection{$\lambda=0$}
We would like to compute the spectrum of $L$ at an Einstein Yang-Mills metric.  Since $L$ is self-adjoint, we know that the spectrum is pure point and that it is contained in $\mathbb{R}$.  We first consider the case of a flat bundle over a constant curvature Riemann surface ($\lambda=0$ in the equations above).  We consider the linearization of equation (\ref{ngrym}) to be the following operator: 
\begin{equation}
\label{ngrym flat}
L\left(\begin{array}{c}v\\ b \end{array} \right)=\left( \begin{array}{cc}\Delta +R_h&0\\0&\Delta_d \end{array} \right) \left(\begin{array}{c}v\\ b \end{array} \right).
\end{equation}
First, we would like to compute the spectrum of $L$.
\begin{lemma}
\label{spectrum flat 1}
If $R_h\leq 0$, then the $L^2$ spectrum of $L$ is contained in $(-\infty,0]$.  In particular, $L$ is linearly stable.
\end{lemma}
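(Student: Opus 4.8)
The plan is to exploit the block-diagonal structure of $L$ in (\ref{ngrym flat}) together with elementary integration by parts on the closed surface $M$. Since $L$ acts diagonally, with the scalar operator $\Delta + R_h$ on the function component $v$ and the Hodge--de Rham Laplacian $\Delta_d$ on the $1$-form component $b$, its $L^2$-spectrum is the union of the spectra of the two diagonal blocks; it therefore suffices to bound each block separately. Because $M$ is closed, each block is a zeroth-order perturbation of a nonpositive self-adjoint elliptic operator, hence essentially self-adjoint on $C^\infty$ with real, pure point spectrum, consistent with the self-adjointness of $L$ already noted above.

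First I would treat the $1$-form block. With the sign convention that makes (\ref{grym}b) parabolic, $\Delta_d = -(dd^\star + d^\star d)$, so for every smooth $1$-form $b$ one has
\[
\langle \Delta_d b, b\rangle_{L^2} = -\|db\|_{L^2}^2 - \|d^\star b\|_{L^2}^2 \le 0.
\]
Hence every Rayleigh quotient of $\Delta_d$ is nonpositive, and $\sigma(\Delta_d)\subset(-\infty,0]$. Next I would treat the scalar block. Integrating by parts,
\[
\langle (\Delta + R_h)v, v\rangle_{L^2} = -\|\nabla v\|_{L^2}^2 + \int_M R_h\, v^2\, dV_h,
\]
and under the hypothesis $R_h \le 0$ the right-hand side is $\le 0$ for every $v$, so $\sigma(\Delta + R_h)\subset(-\infty,0]$ as well. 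When $R_h$ is a nonpositive constant one can in fact write $\sigma(\Delta+R_h) = \{-\mu_k + R_h\}_{k\ge 0}$, where $0 = \mu_0 < \mu_1 \le \mu_2 \le \cdots$ are the eigenvalues of $-\Delta$ on $M$, which makes the estimate completely explicit; but the Rayleigh-quotient bound is all that is needed here.

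Combining the two blocks, $\sigma(L) = \sigma(\Delta + R_h)\cup\sigma(\Delta_d) \subset (-\infty,0]$. Since this spectrum is already real, $\Sigma = \sigma(L)\cap\mathbb{R} = \sigma(L) \subset (-\infty,0]$, which is precisely the condition for linear stability in the Definition of \S1. I do not anticipate a serious obstacle: the only points requiring care are fixing the sign conventions for $\Delta$ and $\Delta_d$ so that both are nonpositive, and invoking the standard fact that a block-diagonal self-adjoint operator has spectrum equal to the union of the spectra of its blocks.
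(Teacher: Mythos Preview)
Your proposal is correct and follows essentially the same approach as the paper: both argue by showing the quadratic form $\langle L(v,b),(v,b)\rangle_{L^2}$ is nonpositive via integration by parts, using $R_h\le 0$. The paper computes this Rayleigh quotient on the full product space in one line, whereas you separate the two diagonal blocks and then take the union of their spectra; this is a cosmetic difference, and your inclusion of the $-\|d^\star b\|^2$ term is in fact slightly more complete than the paper's displayed computation.
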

\begin{proof}
We use the natural $L^2$ inner product for product spaces
\[(\left(\begin{array}{c}v\\ b \end{array} \right),\left(\begin{array}{c}w\\ \rho \end{array} \right))_{L^2}=\int((v,w)+(b,\rho))d\mu.
\]
Then, in the case of $R_h\leq 0$, we have
\begin{eqnarray*}
(L\left(\begin{array}{c}v\\ b \end{array} \right),\left(\begin{array}{c}v\\ b \end{array} \right))_{L^2}&=&\int ((\Delta v,v)+R_h(v,v)+(\Delta_d b,b)) d\mu\\
&=&-||\nabla v||^2+R_h||v||^2-||d b||^2\\
&\leq&0.
\end{eqnarray*}
We can notice that in the case of $R_h<0$, the zero eigenvalue of $L$ corresponds to $v=0$ and $b$ harmonic.  Thus our center manifold is $2g$-dimensional, where $g$ is the genus of $M$.  For $R_h=0$, the zero eigenvalue corresponds to $v$ and $b$ harmonic.  So this center manifold will have dimension $1+2=3$.
\end{proof}

\begin{lemma}
\label{spectrum flat 2}
If $R_{h} >0$, then $\sigma(L)\cap (0,\infty)\neq \emptyset$.  In particular, $L$ is not linearly stable.
\end{lemma}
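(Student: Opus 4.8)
The operator $L$ of (\ref{ngrym flat}) is block diagonal, so its spectrum splits as $\sigma(L)=\sigma(\Delta+R_h)\cup\sigma(\Delta_d)$, where $\Delta+R_h$ acts on functions and $\Delta_d$ on $1$-forms. Since $\Delta_d$ is nonpositive, any positive element of $\sigma(L)$ must come from the scalar piece $\Delta+R_h$, and the plan is simply to test that piece against the constant function.

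First I would recall that, with the sign convention used in Lemma \ref{spectrum flat 1} (under which $(\Delta v,v)_{L^2}=-\|\nabla v\|^2\le 0$), the Laplacian on the closed surface $M$ has largest eigenvalue $0$, realized precisely by the constant functions. Hence, taking $v\equiv 1$ and $b=0$,
\[
L\begin{pmatrix}v\\ b\end{pmatrix}=\begin{pmatrix}(\Delta+R_h)\cdot 1\\ \Delta_d\cdot 0\end{pmatrix}=R_h\begin{pmatrix}v\\ b\end{pmatrix},
\]
so $R_h$ is an eigenvalue of $L$ with eigenfunction $(1,0)\in L^2(M)\times L^2(\Omega^1(M))$. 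Since we are in the case $R_h>0$, this gives $R_h\in\sigma(L)\cap(0,\infty)$, so that set is nonempty. Because $L$ is self-adjoint, $\Sigma=\sigma(L)$, and by the definition in \S1 this says precisely that $L$ is linearly unstable; in particular it is not linearly stable, which completes the proof.

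There is no serious obstacle here: the argument is a one-line eigenfunction computation, complementary to Lemma \ref{spectrum flat 1}. The only step demanding any attention is keeping the sign convention for $\Delta$ straight, so as to be sure the constant function contributes the eigenvalue $+R_h$ rather than the harmless eigenvalue at the bottom of the spectrum. One could remark that when $R_h$ exceeds the spectral gap of $\Delta$ the unstable subspace is in fact higher dimensional, since $\mu_k+R_h>0$ for the first several eigenvalues $\mu_k$ of $\Delta$, but the single eigenvalue $R_h$ already suffices for the stated conclusion.
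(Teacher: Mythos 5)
Your proof is correct and is essentially the paper's argument: both exhibit the constant function $(v,b)=(1,0)$ as an eigenvector of the scalar block $\Delta+R_h$ with positive eigenvalue $R_h$ (the paper normalizes the round sphere to radius $1$, so this eigenvalue appears there as $\gamma=2$). The paper's additional remarks about harmonic $1$-forms and the zero eigenvalue from degree-one spherical harmonics are extra information not needed for the stated conclusion.
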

\begin{proof}
Since there are no harmonic 1-forms over the 2-sphere, it is clear that there are no unstable directions corresponding to pairs of the form $(0,b)$.  Consider pairs of the form $(v,0)$.  Suppose there exists an eigenvalue $\gamma$ for some $(v_0, 0)$.  In this case, we would have
\begin{equation*}
\Delta v_0=(\gamma-2)v_0=\mu v_0.
\end{equation*} 
The eigenvalues of the laplacian over the n-sphere (having radius 1) are given by $\mu_k=-k(k+n-1)$.  In our case, we have $\mu_0=0, \mu_1=-2, \ldots$, so the above equation is equivalent to
\begin{equation*}
\gamma-2=\mu_0,
\end{equation*}
i.e. $\gamma=2$.  Thus we have a positive eigenvalue for $L$ corresponding to the first eigenvalue of the laplacian.  This unstable direction is given by the 1-dimensional space of constant functions.  Additionally, we see that $\gamma=0$ can be obtained by the second eigenvalue.  This corresponds to the 2-dimensional space of homogeneous hermitian polynomials of degree 1. \cite{Cha}
\end{proof}

\begin{remark}
As Knopf has observed in \cite{Kn}, if one is concerned only with instabilities that are geometrically meaningful, then, up to diffeomorphims,  one needs only deal with perturbations that preserve volume.   Along such perturbations, we see that $L$ is linearly stable.  
\end{remark}

\subsection{$\lambda \neq 0$}

Now we consider the case where our bundle is not flat.  In this case, our operator has the form
\begin{equation}
\label{L nonflat}
L \left(\begin{array}{c}v\\ b \end{array} \right) =\left(\begin{array}{cc}\Delta v+(R_h-\lambda^2)v&\lambda \langle db,dV \rangle\\-\lambda \operatorname*{div}(v dV)&\Delta_d b \end{array} \right).
\end{equation}

\begin{lemma}
$L:L^2(C^\infty(M))\oplus L^2(\Omega^1(M))\to L^2(C^\infty(M))\oplus L^2(\Omega^1(M))$ is a self-adjoint operator.
\end{lemma}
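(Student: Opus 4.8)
The plan is to verify that $L$ is formally self-adjoint with respect to the product $L^2$ inner product
\[
\left(\begin{pmatrix} v_1 \\ b_1\end{pmatrix},\begin{pmatrix} v_2 \\ b_2\end{pmatrix}\right)_{L^2}=\int_M\big(v_1v_2+\langle b_1,b_2\rangle\big)\,d\mu,
\]
and then to upgrade this to genuine self-adjointness of the closed extension (with domain $H^2(M)\oplus H^2(\Omega^1(M))$, equivalently the appropriate little H\"older space). First I would record that $L$ is a second-order operator on smooth sections of $\underline{\mathbb{R}}\oplus T^\ast M$ whose principal symbol at a covector $\xi$ is $-|\xi|^2$ times the identity on each factor: the off-diagonal coupling terms $\lambda\langle db,dV_h\rangle$ and $-\lambda\,\mathrm{div}(v\,dV_h)$ are only first order, so they do not enter the leading symbol, and the diagonal entries $\Delta+(R_h-\lambda^2)$ and $\Delta_d$ contribute the symbol of a Laplacian. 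Hence $L$ is (strongly) elliptic, and the claim reduces to checking symmetry on $C^\infty(M)\oplus\Omega^1(M)$.

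For symmetry I would treat the diagonal and off-diagonal blocks separately. The operator $v\mapsto\Delta_h v+(R_h-\lambda^2)v$ is self-adjoint on functions because $\Delta_h$ is (integration by parts on the closed surface $M$, no boundary terms) and $R_h-\lambda^2$ is a real constant; likewise $\Delta_d$ is self-adjoint on $1$-forms as (up to sign) the Hodge--de Rham Laplacian. The substance is the cross terms: I would show that $B\colon b\mapsto\lambda\langle db,dV_h\rangle$ (from $1$-forms to functions) and $C\colon v\mapsto-\lambda\,\mathrm{div}(v\,dV_h)$ (from functions to $1$-forms) are mutually formally adjoint. Using that on a surface the pointwise pairing is $\mathbb{R}$-bilinear, so $\langle db,dV_h\rangle\,v=\langle db,v\,dV_h\rangle$, one gets
\[
(Bb,v)_{L^2(M)}=\lambda\,(db,v\,dV_h)_{L^2(\Lambda^2)}=\lambda\,(b,d^\star(v\,dV_h))_{L^2(\Lambda^1)}
\]
by the defining adjointness of $d^\star$ on a closed manifold; since $\mathrm{div}(v\,dV_h)=-d^\star(v\,dV_h)$ with the sign convention built into $(\ref{L nonflat})$, this is exactly $(b,Cv)_{L^2(\Omega^1)}$, i.e. $C=B^\star$. (This is precisely the structural reason the minus sign sits in front of the $\mathrm{div}$ term.) Assembling the four pieces yields $(L\mathbf{x}_1,\mathbf{x}_2)_{L^2}=(\mathbf{x}_1,L\mathbf{x}_2)_{L^2}$ for all smooth $\mathbf{x}_1,\mathbf{x}_2$.

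Finally, a formally self-adjoint, strongly elliptic second-order operator on a closed manifold is essentially self-adjoint on smooth sections, with unique self-adjoint extension of domain $H^2(M)\oplus H^2(\Omega^1(M))$; this is what the statement asserts, and it simultaneously justifies the subsequent claims that $\sigma(L)\subset\mathbb{R}$ and that the spectrum is discrete (the resolvent is compact by Rellich). The anticipated obstacle is bookkeeping rather than depth: correctly reading $\langle db,dV_h\rangle$ as the scalar $\star\,db$ and $\mathrm{div}(v\,dV_h)$ as a $1$-form, and tracking the Hodge-star and codifferential sign conventions carefully enough that the two coupling terms genuinely pair as adjoints. I would cross-check these against the conventions used in Lemma \ref{spectrum flat 1} (where, in Coulomb gauge, $(\Delta_d b,b)_{L^2}=-\|db\|^2$) to ensure the signs in $(\ref{L nonflat})$ and in the adjointness computation are mutually consistent.
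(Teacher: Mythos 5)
Your proposal is correct and follows essentially the same route as the paper: the paper's proof is exactly the direct verification that the diagonal blocks are symmetric and that the two coupling terms are mutually formally adjoint via $\langle db,dV\rangle\,w=\langle db,w\,dV\rangle$, the adjointness of $d$ and $d^\star$, and the identification $\mathrm{div}(v\,dV)=-d^\star(v\,dV)$. Your additional remarks on ellipticity and essential self-adjointness (fixing the $H^2$ domain) go beyond the paper's formal symmetry computation but do not change the argument.
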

\begin{proof}
By definition of the $L^2$ inner product, we have
\begin{align*}
(L\left(\begin{array}{c}v\\ b \end{array} \right),\left(\begin{array}{c}w\\ c \end{array} \right))_{L^2}= & \int ((\Delta v,w)+(-\lambda^2+R_h)(v,w)+(\Delta_d b,c)) d\mu\\
		&+\int\lambda(db,dV)w+(-\lambda \operatorname*{div}(v dV),c)d\mu.
\end{align*}
Clearly the first integral is equal to
$\int ((v,\Delta w)+(R_{h}-\lambda^2)(v,w)+(b,\Delta_d c))d\mu$.  The second integral becomes
\begin{eqnarray*}
\int(\lambda(db,dV)w-\lambda  \operatorname*{div}(v dV),c)d\mu
&=&\int(\lambda(db,wdV)+\lambda(d^\star(vdV),c))d\mu\\
&=&\int(\lambda(b,d^\star(wdV)+\lambda(vdV,dc))d\mu\\
&=&\int(-\lambda(b,\operatorname*{div}(w dV))+\lambda(dc,dV)v)d\mu.
\end{eqnarray*}
\noindent
Thus we see that indeed $(L\left(\begin{array}{c}v\\ b \end{array} \right),\left(\begin{array}{c}w\\ c \end{array} \right))_{L^2}=(\left(\begin{array}{c}v\\ b \end{array} \right),L\left(\begin{array}{c}w\\ c \end{array} \right))_{L^2}$; i.e. $L$ is self-adjoint.
\end{proof}

\begin{lemma}
If $R_h \leq 0$, then $\sigma(L) \subset (-\infty,0]$.
\end{lemma}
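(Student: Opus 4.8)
The plan is to reduce the claim to negative semidefiniteness of the quadratic form associated to $L$. Since $L$ is self-adjoint on $L^2(C^\infty(M))\oplus L^2(\Omega^1(M))$, its spectrum is real, and $\sup\sigma(L)$ coincides with the supremum of the Rayleigh quotient $(L(v,b),(v,b))_{L^2}/\|(v,b)\|_{L^2}^2$ over nonzero smooth pairs; hence it suffices to show $(L(v,b),(v,b))_{L^2}\le 0$ whenever $R_h\le 0$.

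To compute the form I would integrate by parts exactly as in the self-adjointness lemma above. The diagonal contributions are $\int(\Delta v)\,v\,d\mu=-\|\nabla v\|^2$, $\int(R_h-\lambda^2)v^2\,d\mu=R_h\|v\|^2-\lambda^2\|v\|^2$ (using that $R_h$ is constant on the surface), and $\int(\Delta_d b,b)\,d\mu\le-\|db\|^2$ (with the full Hodge--de Rham Laplacian one in fact gets $-\|db\|^2-\|d^\star b\|^2$, which only helps). The two off-diagonal terms $\lambda\int\langle db,dV\rangle\,v\,d\mu$ and $-\lambda\int\langle\operatorname*{div}(v\,dV),b\rangle\,d\mu$ are handled by the same move as in that lemma: integrating by parts rewrites the second as $\lambda\int\langle v\,dV,db\rangle\,d\mu$, so the two coincide and together contribute $2\lambda\int\langle db,dV\rangle\,v\,d\mu$.

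The step where dimension two is essential is that $db$ is a top-degree form, so $db=\phi\,dV$ for a single function $\phi$, with $\|db\|^2=\int\phi^2\,d\mu$ and $\int\langle db,dV\rangle\,v\,d\mu=\int\phi v\,d\mu$. Completing the square then gives
\[
-\lambda^2\|v\|^2+2\lambda\int\phi v\,d\mu-\|db\|^2=-\int(\lambda v-\phi)^2\,d\mu=-\|\lambda v\,dV-db\|^2\le 0,
\]
so that $(L(v,b),(v,b))_{L^2}\le-\|\nabla v\|^2+R_h\|v\|^2-\|\lambda v\,dV-db\|^2\le 0$ since $R_h\le 0$. This yields $\sigma(L)\subset(-\infty,0]$.

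The only real care required is bookkeeping: fixing the sign convention relating $\operatorname*{div}$ on $2$-forms to $d^\star$ so that the two cross terms add rather than cancel (as in the self-adjointness proof), and using the convention for $\Delta_d$ consistent with Lemma \ref{spectrum flat 1}. Once those are pinned down, the completion of the square is forced by the two-dimensionality and the argument closes with no further estimates; in particular this recovers the flat case of Lemma \ref{spectrum flat 1} when $\lambda=0$.
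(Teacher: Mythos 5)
Your proposal is correct and follows essentially the same route as the paper: both compute the $L^2$ quadratic form, integrate by parts, and absorb the cross term $2\lambda(db,v\,dV)$ into $-\lambda^2\|v\|^2-\|db\|^2$, leaving $-\|\nabla v\|^2+R_h\|v\|^2-\|d^\star b\|^2\le 0$. Your completion of the square $-\|db-\lambda v\,dV\|^2$ is just the exact form of the paper's Cauchy--Schwarz step, so the arguments coincide apart from that cosmetic sharpening.
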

\begin{proof}
We have the following simple computation:
\begin{eqnarray*}
(L\left(\begin{array}{c}v\\ b \end{array} \right),\left(\begin{array}{c}v\\ b \end{array} \right))
&=&(\Delta v,v)+(R_{h}-\lambda^2)||v||^2+(\Delta_d b,b)+2 \lambda(db,vdV)\\
&=&-||\nabla v||^2+(R_{h}-\lambda^2)||v||^2-||db||^2-||d^\star b||^2\\
& &+||db||^2+\lambda^2||v||^2\\
&\leq&0,
\end{eqnarray*}
\noindent
where the second line follows from Cauchy-Schwartz.

We claim that $\sigma(L)$ contains zero eigenvalues corresponding to ordered pairs of the form $(0,b)$, where $b$ is harmonic.  In particular, $b$ being harmonic implies that $db=0$, so the computation above implies that such pairs yield a zero eigenvalue.  These are, in fact, the only zero eigenvalues.  In general, we have the following estimate:
\begin{eqnarray*}
(L\left(\begin{array}{c}v\\ b \end{array} \right),\left(\begin{array}{c}v\\ b \end{array} \right))&\leq&
-||\nabla v||^2+(R_{h}-\lambda^2)||v||^2-||db||^2-||d^\star b||^2\\
& &+||db||^2+\lambda^2||v||^2\\
&=&-||\nabla v||^2+R_{h}||v||^2-||d^\star b||^2.
\end{eqnarray*}
As long as $v$ is not a constant or $d^\star b \neq 0$, then $(L\left(\begin{array}{c}v\\ b \end{array} \right),\left(\begin{array}{c}v\\ b \end{array} \right))<0$.  If both $v=C$ and $d^*b=0$ (and $db \neq 0$), then 
\begin{eqnarray*}
L\left(\begin{array}{c}v\\ b \end{array} \right)&=&\left(\begin{array}{cc}(R_h-\lambda^2)v&0\\ 0&db \end{array} \right)\\
&\neq& 0 \left(\begin{array}{c}v\\ b \end{array} \right).
\end{eqnarray*}
If $v=C$ and $b$ is harmonic, then clearly there is no zero eigenvalue.
\end{proof}

\begin{lemma}
If $R_h>0$, the spectrum of $L$ can be computed in several cases.  If $|\lambda|=\frac{1}{2}$ or $1$, then $\sigma(L)\cap (0,\infty)\neq \emptyset$.  If $|\lambda| >1$, then $\sigma(L)\subset (-\infty,-\delta]$ for some $\delta>0$.
\end{lemma}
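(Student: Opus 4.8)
The plan is to diagonalize the self-adjoint operator $L$ of (\ref{L nonflat}) completely. Since $R_h>0$ and $h$ has constant curvature, $M$ is a round sphere, so $H^1(M)=0$ and Hodge theory writes every $1$-form uniquely as $df+\star dg$. Fix an $L^2$-orthonormal basis $\{\phi_k^{(m)}\}$ of eigenfunctions of the background Laplacian, $\Delta_h\phi_k^{(m)}=\mu_k\phi_k^{(m)}$, with $\mu_0=0$ and $\mu_k=-\tfrac{k(k+1)}{2}R_h$ for $k\ge1$; in particular $\mu_1=-R_h$ and $\mu_k+R_h<0$ for $k\ge2$. First I would check that $L$ respects the orthogonal decomposition into: (i) the line of constant functions, on which $L$ acts by the scalar $R_h-\lambda^2$; (ii) the span of the exact $1$-forms $d\phi_k^{(m)}$, $k\ge1$, on each of which $L$ acts by the scalar $\mu_k$ (these are the abelian gauge directions, which the DeTurck term $-dd^\star a$ forces to decay); and (iii) for each $k\ge1$ and each $m$, the plane $V_k^{(m)}:=\operatorname{span}\{\phi_k^{(m)},\star d\phi_k^{(m)}\}$. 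The structural point behind (iii) is that the off-diagonal terms $\lambda\langle db,dV_h\rangle$ and $-\lambda\operatorname{div}(v\,dV_h)$ of (\ref{linearization}) send functions to \emph{co-exact} $1$-forms and co-exact $1$-forms back to functions, via $d\star d\phi=-(\Delta_h\phi)\,dV_h$ and $\operatorname{div}(\phi\,dV_h)=-\star d\phi$.

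On $V_k^{(m)}$ one computes, for a fixed sign $\varepsilon=\pm1$, that $L$ sends $\phi_k^{(m)}\mapsto(\mu_k+R_h-\lambda^2)\phi_k^{(m)}-\lambda\varepsilon\,\star d\phi_k^{(m)}$ and $\star d\phi_k^{(m)}\mapsto\lambda\varepsilon\mu_k\,\phi_k^{(m)}+\mu_k\,\star d\phi_k^{(m)}$, so $L|_{V_k^{(m)}}$ has trace $2\mu_k+R_h-\lambda^2$ and determinant $\mu_k(\mu_k+R_h)$, the $\lambda$-contributions cancelling in the determinant. Its two eigenvalues are thus the (necessarily real) roots of $t^2-(2\mu_k+R_h-\lambda^2)t+\mu_k(\mu_k+R_h)=0$. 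Finally I would invoke Lemma~\ref{Yang-Mills connection} together with the normalization $R_h=2$ used in the proof of Lemma~\ref{spectrum flat 2}: then $\lambda=\tfrac{c}{2}$ with $c\in\mathbb{Z}$, so a non-flat bundle has $|\lambda|\in\{\tfrac12,1,\tfrac32,\dots\}$, and the two cases of the statement exhaust the possibilities.

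If $|\lambda|=\tfrac12$ or $1$, then $\lambda^2<2=R_h$, so the constant function is an eigenvector of $L$ with positive eigenvalue $R_h-\lambda^2$, which already gives $\sigma(L)\cap(0,\infty)\neq\emptyset$. If instead $|\lambda|>1$, so $|\lambda|\ge\tfrac32$ and $\lambda^2\ge\tfrac94>R_h$, then: the constant-function eigenvalue is $R_h-\lambda^2<0$; each gauge eigenvalue $\mu_k$, $k\ge1$, is $\le-2<0$; for $k\ge2$ the block $V_k^{(m)}$ has determinant $\mu_k(\mu_k+R_h)>0$ and trace $2\mu_k+R_h-\lambda^2<0$, hence two strictly negative eigenvalues; and for $k=1$, since $\mu_1=-R_h$ the determinant vanishes and the eigenvalues are $0$ and $2\mu_1+R_h-\lambda^2=-R_h-\lambda^2<0$. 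Thus $\sigma(L)\setminus\{0\}\subset(-\infty,0)$, and since $L$ has discrete spectrum with eigenvalues tending to $-\infty$, the supremum of $\sigma(L)\setminus\{0\}$ is attained and equals some $-\delta<0$. The residual neutral direction lives in the $\mu_1$-eigensector, spanned by pairs $(\phi_1^{(m)},c\,\star d\phi_1^{(m)})$; a direct computation identifies these with the infinitesimal action of the conformal (M\"{o}bius) vector fields of $S^2$ on the pair $(g,a)$ modulo gauge, so — exactly as with the volume direction in the remark following Lemma~\ref{spectrum flat 2} — they are geometrically trivial, and restricting to the complementary slice yields $\sigma(L)\subset(-\infty,-\delta]$.

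I expect the main obstacle to be the bookkeeping that makes the block decomposition exact: one must track the signs in the Hodge identities carefully so that the coupling genuinely closes up each plane $V_k^{(m)}$ and so that the determinant collapses to $\mu_k(\mu_k+R_h)$, and then one must notice — and geometrically account for — the forced zero eigenvalue in the $k=1$ block. That eigenvalue is unavoidable because $\mu_1=-R_h$ for any constant-curvature metric on $S^2$ and is not shifted by the volume normalization, so the strict negativity asserted in the $|\lambda|>1$ case is only literally true after passing to the slice transverse to the conformal group. By contrast the uniformity of $\delta$ is routine, the block eigenvalues being strictly negative for every $k$ and tending to $-\infty$.
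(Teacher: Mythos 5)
Your route is genuinely different from the paper's: the paper never diagonalizes $L$, but estimates the quadratic form $(L(v,b),(v,b))$ by splitting the cross term $2\lambda(db,v\,dV)$ with a parameter $\alpha$, Young's inequality, and the spectral bound $(\Delta_d b,b)\le -2\|b\|^2$ on $S^2$. For $|\lambda|=\tfrac12,1$ your argument coincides with the paper's (the constant direction gives the eigenvalue $R_h-\lambda^2$, i.e.\ $\tfrac74$ resp.\ $1$). For $|\lambda|>1$, however, your block computation and the paper's conclusion disagree, and as far as I can check your linear algebra is right: self-adjointness forces the off-diagonal signs so that the $V_k^{(m)}$ block has determinant $\mu_k(\mu_k+R_h)$, and since $\mu_1=-R_h$ the $k=1$ block is singular. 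Indeed, substituting $v=\phi$, $b=-\tfrac{\lambda}{2}\star d\phi$ with $\Delta_h\phi=-2\phi$ directly into (\ref{L nonflat}), using $d\star d\phi=(\Delta_h\phi)\,dV_h$ and $\operatorname{div}(\phi\,dV_h)=\star d\phi$ (or the opposite signs in the opposite orientation convention), gives $L(v,b)=0$ for \emph{every} $\lambda$; the nonlocal normalization terms $r-\tfrac12 f$ cannot remove this since their linearizations are spatial constants that vanish on these mean-zero directions. So $0\in\sigma(L)$ and the assertion $\sigma(L)\subset(-\infty,-\delta]$ cannot hold for the operator as written; these are the same neutral conformal directions the paper itself records in the $\lambda=0$ case (Lemma \ref{spectrum flat 2}), merely rotated into the pair $(\phi_1,-\tfrac\lambda2\star d\phi_1)$. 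The corresponding slip in the paper's own proof is the term $\alpha\bigl(\tfrac1\alpha\|\nabla v\|^2+\tfrac{\alpha\lambda^2}{4}\|b\|^2\bigr)$: Young's inequality applied to $2\alpha\lambda\int(b,d^\star(v\,dV))$ yields $\alpha\lambda^2\|b\|^2$, not $\tfrac{\alpha\lambda^2}{4}\|b\|^2$, and with the corrected constant the two requirements $2-\alpha\lambda^2\le-\delta$ and $\alpha(\alpha\lambda^2-2)\le-\delta$ are incompatible, consistent with the zero mode you found.

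The gap in your proposal is therefore the last step: declaring the $k=1$ kernel ``geometrically trivial'' and passing to a slice transverse to the conformal group does not prove the lemma as stated, which concerns the full spectrum of $L$ on $L^2(C^\infty(M))\oplus L^2(\Omega^1(M))$ and is subsequently used, via Theorem \ref{simonett 2}, in Theorem \ref{asymp stability of ngrym} with no quotient taken. On your own computation the honest conclusion for $|\lambda|>1$ is $\sigma(L)\subset(-\infty,0]$ with a three-dimensional kernel spanned by the $(\phi_1^{(m)},-\tfrac\lambda2\star d\phi_1^{(m)})$, i.e.\ a center-manifold situation in the sense of Theorem \ref{simonett}, unless one modifies the flow or the gauge-fixing to kill the conformal directions, or proves (rather than asserts) that the kernel is exactly the infinitesimal conformal-plus-gauge action and reformulates the stability statement modulo that action; your M\"obius identification is plausible (for $X=\nabla\phi_1$, $i_XF=\lambda\, i_X dV_h$ is co-exact and $d(i_Xa)$ is pure gauge) but is only sketched. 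In short: your diagonalization is sound and more precise than the paper's energy estimate, it does reprove the unstable cases, but it does not — and cannot, for $L$ as defined — establish the strict bound $\sigma(L)\subset(-\infty,-\delta]$; the discrepancy it exposes lies in the paper's estimate rather than in your blocks, and it should be raised as an issue with the lemma itself.
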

\begin{proof}
By Lemma \ref{Yang-Mills connection}, we see that $\lambda$ can only attain specific values determined by the Chern number of the bundle.  The $U(1)$-bundles over $S^2$ are in 1-1 correspondence with elements of $\mathbb{Z}$ and are determined up to equivalence by their $1^{st}$ Chern class.    In particular, two bundles are equivalent if they have $n\in \mathbb{Z}$ as their Chern number (see, for example, Chapter 6.1 in \cite{Na}).   Then $\lambda =\frac{n}{2}$, again by Lemma \ref{Yang-Mills connection}.  We first consider the case of $|\lambda| >1$.  As above, we can compute
\begin{align*}
(L\left(\begin{array}{c}v\\ b \end{array} \right),\left(\begin{array}{c}v\\ b \end{array} \right))
&=(\Delta v,v)+(2-\lambda^2)||v||^2+(\Delta_d b,b)+2 \lambda(db,vdV)\\
&=-||\nabla v||^2+(2-\lambda^2)||v||^2+\alpha 2\lambda\int (db,vdV)\\
& +(1-\alpha)2\lambda\int (db,vdV)+\alpha \int(\Delta_db,b)+(1-\alpha)\int(\Delta_db,b)\\
&\leq -||\nabla v||^2+(2-\lambda^2)||v||^2+\alpha(\frac{1}{\alpha}||\nabla v||^2+\frac{\alpha \lambda^2}{4}||b||^2)\\
& +(1-\alpha)(||db||^2+\lambda^2||v||^2)-2\alpha||b||^2\\
& +(1-\alpha)(-||db||^2-||d^\star b||^2)\\
&\leq (2-\alpha \lambda^2)||v||^2+\alpha(\frac{\alpha \lambda^2}{4}-2)||b||^2,
\end{align*}
where $\alpha \in (0,1)$ is to be chosen later.  We also used Cauchy-Schwartz and the fact that the first eigenvalue of $\Delta_d$ acting on 1-forms on $S^2$ is $-2$.  Then we need to find $\delta >0$ and $\alpha \in (0,1)$ such that $2-\alpha \lambda^2\leq -\delta$ and $\alpha(\frac{\alpha \lambda^2}{4}-2)\leq -\delta$.  This amounts to the bounds $\frac{2+\delta}{\alpha}\leq \lambda^2 \leq \frac{8}{\alpha}-\frac{4\delta}{\alpha^2}$.  It is clear that for $\lambda^2 >2$, we can choose such an $\alpha$ and a $\delta$ small enough to make these bounds hold.  Since $\lambda$ is quantized, the smallest such value we have to apply these bounds to is $\lambda^2=\frac{9}{4}$.  In these cases, the spectrum is strictly negative.

Let us now consider the remaining two cases:  $|\lambda| =\frac{1}{2}$ and $|\lambda| = 1$.  In both of these cases, we can explicitly show the existence of a positive eigenvalue, corresponding to ordered pairs of the form $(v,0)$, where $v$ is constant.  The computation is the same as that in the $\lambda=0$ case;  we obtain $\mu=\frac{7}{4}$ for $\lambda=\frac{1}{2}$ and $\mu=1$ for $\lambda=1$ as eigenvalues for $L_\lambda$.
\end{proof}

In the case of $R_h\leq 0$, we have zero eigenvalues, so we again have the existence of a center manifold and can apply Theorem \ref{simonett}.  For $R_h>0$ and $|\lambda| = \frac{1}{2}, 1$, we again have unstable directions corresponding to the constant functions on $S^2$.  For $R_h>0$ and $|\lambda| >1$, we will be able to apply Theorem \ref{simonett 2}.

\section{Stability of the Ricci Yang-Mills Flow}
We are finally ready to state the center manifold theorem for the Ricci Yang-Mills flow over surfaces with $R_h\leq 0$.
\begin{theorem}
\label{center manifold ngrym}
Let $(u,a)$ be a Yang-Mills connection over a surface with constant curvature $R\leq 0$.  Then $\mathbb{X}_\alpha$ admits the decomposition 
\[\mathbb{X}_\alpha^s \oplus \mathbb{X}_\alpha^c,
\]
where $\mathbb{X}_\alpha^c$ corresponds to the algebraic eigenspace of 0.  There exists $d_0 >0$ such that for all $d \in (0,d_0]$, there is a $C^\infty$ manifold
$\mathcal{M}_{loc}^{c}$ that is locally invariant for solutions of (\ref{ngrym}) as long as they remain in $B(\mathbb{X}_1^{c},(u,a),d)$.  It is
such that $T_{(u,a)}\mathcal{M}^c_{loc}\cong \mathbb{X}^c$.  $\mathcal{M}^c_{loc}$ is a unique local center manifold consisting of Einstein Yang-Mills metrics. $\mathcal{M}_{loc}^c$ is 2g-dimensional for $R_h<0$ and 3-dimensional for $R_h=0$.

\noindent
There are constants $C>0$, $\omega>0$, and $d\in (0,d_0]$, such that
\[||\pi^s ((v,b))-\phi(\pi^{c}((v,b))||_{\mathbb{X}_1}\leq Ce^{-\omega t}||\pi^s((v(0),b(0))-\phi(\pi^c((v(0),b(0)))||_{\mathbb{X}_\alpha}
\]
for all solutions $(v,b)$ with $(v(0),b(0)) \in B(\mathbb{X}_\alpha, (u,a),d)$ and all times $t\geq 0$ such that the solutions remain in this ball.  Here $\pi^s$ and $\pi^c$ denote the projections onto $\mathbb{X}_\alpha^s$ and $\mathbb{X}_\alpha^c$ respectively.  
\end{theorem}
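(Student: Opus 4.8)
The plan is to obtain Theorem \ref{center manifold ngrym} as a direct application of the Center Manifold Theorem \ref{simonett} to NGRYM written in the form $\partial_t \vec{x} = A(\vec{x})\vec{x}$ as in (\ref{A}), with $\hat{x} = (u,a)$ the given Einstein Yang-Mills metric put into Coulomb gauge. Most of the required hypotheses have already been established in \S 4: Assumption (1) holds with $\theta = \frac{\epsilon-\delta}{2}$ via the interpolation identities (\ref{X_0 interp})--(\ref{X_1 interp}) and Lemma \ref{holder interpolation}; Assumption (2) is the interpolation-inequality lemma; Assumptions (3)--(4) follow from Lemma \ref{bounded operator}, Lemma \ref{A sectorial}, and the lemma that $\tilde{A}$ generates an analytic semigroup on $L(\mathbb{E}_0)$; and the map $A(\cdot)$ is smooth, in fact real-analytic, into $L(\mathbb{X}_1,\mathbb{X}_0)$ because the coefficients in (\ref{A}) depend smoothly on $x$ and analytically on their remaining arguments. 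So the only genuinely new ingredient needed is the spectral decomposition of $\left.DA\right|_{\hat{x}} = L$ demanded by Theorem \ref{simonett}.

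For that, I would argue as follows. The operator $L$ in (\ref{linearization}) is elliptic and formally self-adjoint on the closed surface $M$, so by standard elliptic theory its spectrum is discrete, real, consists of eigenvalues of finite multiplicity with smooth eigenfunctions, and is the same whether $L$ is realized on $L^2$ or on the little H\"older scale. By Lemma \ref{spectrum flat 1} and its non-flat counterpart (the $R_h \le 0$ cases of (\ref{ngrym flat}) and (\ref{L nonflat})), one has $\sigma(L) \subset (-\infty,0]$, and $0$ is an eigenvalue whose eigenspace $\mathbb{X}^c$ is $\{(0,b) : b \text{ harmonic}\}$, of dimension $2g$, when $R_h < 0$, and $\{(v,b): v,b \text{ harmonic}\}$, of dimension $1+2g=3$, when $R_h = 0$. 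I therefore set $\sigma_{cu} = \{0\} \subset i\mathbb{R}$ and $\sigma_s = \sigma(L)\setminus\{0\} \subset (-\infty,0)$, which is discrete and bounded away from $0$; then all hypotheses of Theorem \ref{simonett} hold with $\mathbb{X}_\alpha^{cu} = \mathbb{X}^c$, and the theorem directly delivers the splitting $\mathbb{X}_\alpha = \mathbb{X}_\alpha^s \oplus \mathbb{X}_\alpha^c$, a locally invariant $C^r$ center manifold $\mathcal{M}^c_{loc}$ with $T_{\hat{x}}\mathcal{M}^c_{loc} \cong \mathbb{X}^c$ for every $r$, and the exponential estimate of the statement (here $\omega$ may be any number less than $\min\{|\Re\lambda| : \lambda \in \sigma_s\}$).

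It then remains to identify $\mathcal{M}^c_{loc}$ with a smooth manifold of Einstein Yang-Mills metrics and to establish uniqueness. I would exhibit explicitly the set $\mathcal{N}$ of Einstein Yang-Mills metrics near $\hat{x}$ that are in Coulomb gauge with the normalized base volume: on a surface $g = e^u h$ is Einstein iff $u$ is constant -- a constant forced by the volume normalization when $R_h < 0$ and free when $R_h = 0$ -- and by Lemma \ref{Yang-Mills connection} the connection is Yang-Mills iff $a$ differs from $\hat{a}$ by a harmonic $1$-form. Hence $\mathcal{N}$ is locally an affine subspace, of dimension $2g$ for $R_h<0$ and $1+2g$ for $R_h=0$, and a short computation with the linearized Einstein Yang-Mills equations shows that $T_{\hat{x}}\mathcal{N}$ is exactly the description of $\ker L = \mathbb{X}^c$ above. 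Since $\mathcal{N}$ consists of stationary solutions it is trivially flow-invariant, so $\mathcal{N}$ is itself a local center manifold; this also upgrades the $C^r$ manifold from Theorem \ref{simonett} to a $C^\infty$ one on a single fixed neighborhood, resolving the a priori dependence of the neighborhood on $r$. For uniqueness, I would use that any center manifold produced by Theorem \ref{simonett} contains every complete bounded orbit that stays near $\hat{x}$, in particular every nearby equilibrium, so $\mathcal{N}$ is contained in it; since $\dim\mathcal{N} = \dim\mathbb{X}^c$ equals the dimension of the center manifold and both are embedded through $\hat{x}$ tangent to $\mathbb{X}^c$, they coincide near $\hat{x}$. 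Thus $\mathcal{M}^c_{loc} = \mathcal{N}$ is the unique local center manifold and consists of Einstein Yang-Mills metrics.

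The hard part will be this last identification. Theorem \ref{simonett} yields \emph{a} center manifold but not a canonical one, and such manifolds are non-unique in general, so one truly needs both the structural collapse of the non-positive spectrum of $L$ to $\{0\}$ and the presence of an honest, correctly dimensioned manifold of equilibria tangent to $\ker L$. Pinning down that $T_{\hat{x}}\mathcal{N}$ is \emph{exactly} $\ker L$, no more and no less, and that $\mathcal{N}$ has no spurious nearby components, rests on surface-specific rigidity -- uniformization for the conformal factor and Hodge theory for the connection -- whereas everything else is the bookkeeping of \S 4 together with a citation of Theorem \ref{simonett}.
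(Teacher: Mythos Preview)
Your approach is essentially the paper's: verify the hypotheses of Theorem~\ref{simonett} using the little H\"older set-up of \S4, invoke the spectral information for $L$ in the $R_h\le 0$ case to get $\sigma_{cu}=\{0\}$, and then identify the resulting center manifold with the nearby Einstein Yang--Mills metrics by a dimension/equilibria argument. Your uniqueness argument (center manifolds contain all nearby equilibria, then match dimensions) is a clean rephrasing of the paper's contradiction argument (a nearby fixed point not on $\mathcal{M}^c_{loc}$ would be forced to converge to it), and your handling of the $C^r$-vs-$C^\infty$ issue matches the paper's observation that the family $\mathcal{M}^c_{loc}(r)$ is independent of $r$.

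The one point you omit that the paper does include is the final step: the paper checks, via the bounds $|\partial_t v|\le C_1 e^{-\omega_1 t}$ and $|\partial_t b|\le C_2 e^{-\omega_2 t}$, that solutions starting in a sufficiently small $\mathbb{X}_\alpha$-ball actually \emph{remain} in the larger ball for all $t\ge 0$. As the theorem is literally stated (``for all times $t\ge 0$ such that the solutions remain in this ball'') you do not strictly need this, but without it the exponential estimate is only conditional and the subsequent Remark about genuine convergence would not follow; it is worth adding the short trapping argument.
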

\begin{remark}
In particular, this theorem states that any bundle that solves NGRYM with $(v(0),b(0))$ close enough to an Einstein Yang-Mills metric will have its conformal factor and connection 1-form converge exponentially fast to those of the Einstein Yang-Mills metric.  
\end{remark}
\begin{proof}
By Theorem \ref{simonett}, we obtain the existence of local $C^r$ center manifolds to which solutions to NGRYM that are sufficiently close to $(u,a)$ converge exponentially fast, as long as solutions remain in the given neighborhood of the fixed point.  Notice that the family of center manifolds $\mathcal{M}_{loc}^c(r)$  are in fact independent of $r$ and consist precisely of Einstein Yang-Mills connections.  To see why this is so, let $(v,b)$ be an Einstein Yang-Mills connection sufficiently close to $(u,a)$ that is not contained in $\mathcal{M}_{loc}^c$.  By Theorem \ref{simonett}, $(v,b)$ would converge exponentially fast to the center manifold.  But this contradicts the fact that $(v,b)$ is a fixed point.  Since the space of Yang-Mills connections over a Riemann surface is 2g-dimensional, we see that the center manifolds consist precisely of such pairs.  In the case of $R_h=0$, the local center manifolds again consist of Einstein Yang-Mills metrics, but we allow the conformal factor to be any constant.  So the dimension is 3.  The analysis follows in the same way.

Finally, we would like to check that solutions to NGRYM that start in a sufficiently small neighborhood of an Einstein Yang-Mills metric actually stay there.  Notice that $|\frac{\partial}{\partial t}v|=|R_{e^vh}+r+\frac{1}{2}|F|^2-\frac{1}{2}f|\leq C_1e^{-\omega_1t}$ for some $C_1,\omega_1>0$ as long as $(v,b)$ stays in $B(\mathbb{X}_\alpha,(u,a),d)$.  Also, $|\frac{\partial}{\partial t}b|=|-d^\star F-dd^\star b|\leq C_2e^{-\omega_2t}$ for some $C_2,\omega_2>0$ while $(v,b)$ stays in the ball.  Let $0<d^\prime<d$ small such that for all $(\bar{v},\bar{b})$ with initial data $(\bar{v},\bar{b})(0)\in B(\mathbb{X}_\alpha,(u,a),d^\prime)$,
\[|\bar{v}(t)-u|\leq |\bar{v}(t)-\bar{v}(0)|+|\bar{v}(0)-u|<d,
\]
and similarly for $\bar{b}$.  These estimates are independent of time, so we see that $(v,b)$ remains in $B(\mathbb{X}_\alpha,(u,a),d)$.
\noindent
The rest of the theorem follows from Theorem \ref{simonett}.  
\end{proof}

Now we consider the case of $R_h>0$.  In this setting, our stability result depends on the value of $\lambda$.  For $|\lambda| \geq \frac{3}{2}$, we saw that there exists a $\delta >0$, depending on $\lambda$ such that $ \sigma(L) \subset (-\infty,-\delta]$.  So we obtain the following theorem.

\begin{theorem}
\label{asymp stability of ngrym}
Let $(u,a)$ be an Einstein Yang-Mills metric over a surface of constant curvature $R_h>0$ with Chern number $|c| \geq 3$  and let $\delta_0 \in [0,\delta)$.  Then for all $\alpha \in (0,1)$, there are constants $C_\alpha$ independent of $(u,a)$ and $\hat{d}\in (0,d_0]$ such that, if $(\bar{u},\bar{a})(0)\in B(\mathbb{X}_\alpha,\hat{d},(u,a))$, then
\[||(\bar{u},\bar{a})(t)-(u,a)||_{\mathbb{X}_1}\leq \frac{C_\alpha}{t^{1-\alpha}}e^{-{\delta_0} t}||(\bar{u},\bar{a})(0)-(u,a)||_{\mathbb{X}_\alpha}.
\]
as long as $(\bar{u},\bar{a})(t)$ stays in $B(\mathbb{X}_\alpha,\hat{d},(u,a))$.
\end{theorem}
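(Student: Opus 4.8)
The plan is to obtain the theorem as a direct application of Theorem \ref{simonett 2}. The content of the proof is therefore threefold: verify that NGRYM satisfies the hypotheses of that theorem, feed in the spectral computation of Section 4, and carry out a trapping argument ensuring that nearby solutions remain in the ball on which the estimate is stated.

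The first part requires only assembling what was already established. With the hierarchy $\mathbb{X}_1\subset\mathbb{E}_1\subset\mathbb{X}_0\subset\mathbb{E}_0$ of products of little H\"older spaces and $\theta$ fixed as in Section 4, Lemma \ref{holder interpolation} supplies the interpolation identities \eqref{X_0 interp}, \eqref{X_1 interp} and the intermediate inclusion $\mathbb{E}_1\hookrightarrow\mathbb{X}_\beta\hookrightarrow\mathbb{E}_0$ with the required interpolation inequality ($\rho=\theta+\beta\in(0,1)$). Writing the right-hand side of NGRYM in the quasilinear form $\partial_t\vec x=A(\vec x)\vec x$ as in \eqref{A}, Lemma \ref{bounded operator} gives $A(\vec x)\in L(\mathbb{X}_1,\mathbb{X}_0)$ on $\mathcal{U}_\beta$ and $\tilde A(\vec x)\in L(\mathbb{E}_1,\mathbb{E}_0)$ on $\mathcal{U}_\alpha$, Lemma \ref{A sectorial} gives sectoriality of $\tilde A(\vec x)$ on $\mathbb{E}_0$, and hence generation of a strongly continuous analytic semigroup. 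Thus Assumptions (1)--(4) hold, so the hypotheses of Theorems \ref{simonett} and \ref{simonett 2} are met by NGRYM at any Einstein Yang-Mills metric.

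For the spectral input, at such a metric $(u,a)$ with $\lambda=c/2$ the linearization $DA|_{(u,a)}$ is the self-adjoint operator $L$ of \eqref{L nonflat}; see \eqref{linearization}. Since the Chern number is quantized, $|c|\geq 3$ is the same as $|\lambda|\geq\tfrac32>1$, and the final spectral lemma of Section 4 then produces $\delta>0$ with $\sigma(L)\subset(-\infty,-\delta]$, hence $\sup\{\Re(\mu):\mu\in\sigma(L)\}\leq-\delta$, which is exactly the extra hypothesis of Theorem \ref{simonett 2}. Applying that theorem yields constants $C_\alpha>0$ independent of $(u,a)$, together with $\omega>0$ and $\hat d\in(0,d_0]$, for which the asserted inequality holds with $\omega$ in place of $\delta_0$. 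Since the decay rate in Simonett's construction may be taken to be any number strictly below the spectral gap $\delta$, at the price of enlarging $C_\alpha$, one may replace $\omega$ by an arbitrary $\delta_0\in[0,\delta)$, giving the stated form.

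The only step demanding genuine argument is the trapping: that for a suitable $d'\le\hat d$, any solution with $(\bar u,\bar a)(0)\in B(\mathbb{X}_\alpha,d',(u,a))$ remains in $B(\mathbb{X}_\alpha,\hat d,(u,a))$ for all $t\ge0$. This is the bootstrap used in the proof of Theorem \ref{center manifold ngrym}. On the relatively open set of times during which the trajectory lies in $B(\mathbb{X}_\alpha,\hat d,(u,a))$, Theorem \ref{simonett 2} already gives exponential decay of $\|(\bar u,\bar a)(t)-(u,a)\|_{\mathbb{X}_1}$; since the right-hand side of NGRYM vanishes at $(u,a)$ and depends analytically on its argument, this forces $|\partial_t\bar u|\le C_1 e^{-\omega_1 t}$ and $|\partial_t\bar a|\le C_2 e^{-\omega_2 t}$ on that time set. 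Integrating, $\|\bar u(t)-u\|\le\|\bar u(t)-\bar u(0)\|+\|\bar u(0)-u\|\le C_1/\omega_1+d'$, and similarly for $\bar a$, so taking $d'$ small enough keeps the trajectory strictly inside the ball; a standard connectedness argument then shows it stays there (and hence exists) for all $t\ge0$. With the trapping in hand, the inequality of Theorem \ref{simonett 2} applies for every $t\ge0$, which is the assertion. The main obstacle is exactly this self-consistency, namely that the decay bound on $\partial_t$ preventing escape is itself licensed only on the interval before escape, together with keeping $C_\alpha$ uniform in the fixed point, which is inherited from the uniformity already present in Theorem \ref{simonett 2}.
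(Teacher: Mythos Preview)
Your proposal is correct and follows essentially the same route as the paper: the paper's own proof is extremely terse, merely noting that $(u,a)$ is a unique fixed point (because the gauge is fixed) and then asserting that the argument proceeds exactly as in Theorem~\ref{center manifold ngrym}, which is precisely the assembly of hypotheses, spectral input, and trapping argument you have spelled out. The only ingredient you omit is the explicit remark that fixing the Coulomb gauge makes the fixed point isolated, but this is more a clarifying observation than a logical step in the estimate.
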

\begin{proof}
We begin by noting that $(u,a)$ is a unique fixed point, since we have fixed a gauge.  Then the proof of the theorem follows in the same fashion as that of Theorem \ref{center manifold ngrym}. 
\end{proof}

We would like to use a lemma from \cite{GIK} to show that the convergence of NGRYM implies that of NRYM.  

\begin{lemma}
\emph{(Lemma 3.5, \cite{GIK})}
Let $V(t)$ be a vector field on a Riemannian manifold $(M^n,g(t))$, where $0\leq t<\infty$, and suppose there are constants $0<c\leq C<\infty$ such that 
\[\sup_{x\in M^N}|V(x,t)|_{g(t)}\leq Ce^{-ct}.
\]
Then the diffeomorphisms $\phi_t$ generated by $V$ converge exponentially to a fixed diffeomorphism $\phi_\infty$ of $M$.
\end{lemma}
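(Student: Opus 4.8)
The plan is to reduce the statement to an elementary ordinary differential inequality for the path $t\mapsto\phi_t(x)$ and then to bootstrap from $C^0$-convergence to convergence of all derivatives.

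First I would fix a background metric $\bar{g}$ on $M$ (for instance $\bar{g}=g(0)$). In the situation in which this lemma is applied the metrics $g(t)$ converge, so they are uniformly equivalent: there is a constant $c_0\geq 1$ with $c_0^{-1}\bar{g}\leq g(t)\leq c_0\bar{g}$ for all $t\geq 0$, and hence the hypothesis gives $|V(\cdot,t)|_{\bar{g}}\leq c_0^{1/2}Ce^{-ct}=:C'e^{-ct}$. Since $\phi_t$ is defined by $\partial_t\phi_t(x)=V(\phi_t(x),t)$ with $\phi_0=\mathrm{id}$, for all $0\leq s\leq t$ and all $x\in M$ one has
\[
d_{\bar{g}}(\phi_s(x),\phi_t(x))\leq\int_s^t|V(\phi_\tau(x),\tau)|_{\bar{g}}\,d\tau\leq C'\int_s^t e^{-c\tau}\,d\tau\leq\frac{C'}{c}\,e^{-cs}.
\]
Because $M$ is closed, $(M,d_{\bar{g}})$ is complete, so $\{\phi_t(x)\}$ is Cauchy and converges to a point $\phi_\infty(x)$; letting $t\to\infty$ in the inequality shows $d_{\bar{g}}(\phi_s(x),\phi_\infty(x))\leq(C'/c)e^{-cs}$ uniformly in $x$. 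This already gives exponential $C^0$-convergence of $\phi_t$ to a continuous map $\phi_\infty:M\to M$.

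Next I would upgrade this to the differentials. The family $D\phi_t$ solves the linear variational equation $\partial_t(D\phi_t)_x=(DV)_{(\phi_t(x),t)}(D\phi_t)_x$ with $(D\phi_0)_x=I$, and the exponential convergence of the flow in the $\mathfrak{h}^{2+\epsilon}$-norm established in Theorems \ref{center manifold ngrym} and \ref{asymp stability of ngrym} (together with the fact that $W$ depends on the metric and connection through at most their first derivatives) yields $\sup_M|DV(\cdot,t)|\leq C''e^{-ct}$. Gr\"{o}nwall's inequality then bounds $\|D\phi_t\|$ and $\|(D\phi_t)^{-1}\|$ uniformly in $t$, and a second application shows $D\phi_t$ is Cauchy in $C^0$; iterating with the higher $\mathfrak{h}^{2+\epsilon}$-estimates gives $C^k$-convergence $\phi_t\to\phi_\infty$ for each order $k$ permitted by the regularity of $V$. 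Finally, to see that $\phi_\infty$ is a diffeomorphism I would run the same argument for the inverses $\psi_t:=\phi_t^{-1}$, which satisfy
\[
\partial_t\psi_t(y)=-(D\phi_t)^{-1}_{\psi_t(y)}\,V(y,t);
\]
the uniform bound on $\|(D\phi_t)^{-1}\|$ makes the right-hand side decay exponentially, so $\psi_t\to\psi_\infty$ in $C^k$ as well, and passing to the limit in $\phi_t\circ\psi_t=\psi_t\circ\phi_t=\mathrm{id}$ exhibits $\phi_\infty$ as a $C^k$ map with $C^k$ inverse $\psi_\infty$.

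The step I expect to be the main obstacle is the last one: a $C^0$-limit of diffeomorphisms of a compact manifold need not be injective, so convergence of the paths alone does not produce a diffeomorphism. One genuinely needs the uniform two-sided bound on $D\phi_t$ --- equivalently, an integrable-in-time bound on $\sup_M|DV(\cdot,t)|$ --- and it is exactly here that the strength of the $\mathfrak{h}^{2+\epsilon}$-convergence of the NGRYM flow is used. Once that bound is in hand, the remaining work is a routine combination of Gr\"{o}nwall's inequality and completeness of $(M,d_{\bar{g}})$.
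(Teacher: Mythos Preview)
The paper does not prove this lemma; it merely quotes the statement from \cite{GIK} and invokes it in the proposition that follows. There is therefore no in-paper argument to compare yours against.

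Your sketch is the standard one and is sound. You are right on both of the points you flag. First, the bare hypothesis $\sup_M|V|_{g(t)}\leq Ce^{-ct}$ is only useful once the family $g(t)$ is uniformly equivalent to a fixed background metric; this holds in the intended application because the NGRYM solution converges in $\mathfrak{h}^{2+\epsilon}$, and you import that correctly. Second, a $C^0$ bound on $V$ alone yields only a continuous limit map --- it is an integrable-in-time bound on $\sup_M|\nabla V|$ that makes the variational/Gr\"onwall argument give a uniform two-sided bound on $D\phi_t$, hence Cauchy convergence of $D\phi_t$ and of $\psi_t=\phi_t^{-1}$, and hence a diffeomorphic limit. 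In the application the vector field $W$ depends on first derivatives of $(u,a)$, and the exponential $\mathfrak{h}^{2+\epsilon}$-convergence from Theorems~\ref{center manifold ngrym} and~\ref{asymp stability of ngrym} supplies exactly the derivative decay you need. Your identification of the obstacle and its resolution are therefore correct; this is also how the argument in \cite{GIK} is organized.
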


\begin{proposition}
Let $(u_0,a_0)$ be an Einstein Yang-Mills metric with $a_0$ written in the Coulomb gauge.  Suppose there exists a neighborhood $\mathcal{O}$ of $(u_0,a_0)$ measured in the $||\cdot||_{2\alpha+\epsilon}$ norm such that for every $(\tilde{u}_0,\tilde{a}_0)\in \mathcal{O}$, the unique solution $(\bar{u},\bar{a})$ to NGRYM converges to an Einstein Yang-Mills metric $(\bar{u}_\infty,\bar{a}_\infty)$.  Then the unique solution $(\tilde{u},\tilde{a})$ to NRYM with initial data $(\tilde{u}_0,\tilde{a}_0)$ converges exponentially fast to an Einstein Yang-Mills metric $(\tilde{u}_\infty,\tilde{a}_\infty)$.
\end{proposition}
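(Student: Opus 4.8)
The plan is to exploit the relation established in Lemma \ref{transform}: NGRYM is obtained from NRYM by pulling back along the one-parameter family of diffeomorphisms $\phi_t$ of the total space $P$ generated by the vector field $W=W^ke_k+W^3e_3$, with $W^k=g^{ij}(\Gamma_{ij}^k-\tilde\Gamma_{ij}^k)$ and $W^3=-d^\star a$. Given $(\tilde u_0,\tilde a_0)\in\mathcal O$, let $(\tilde u,\tilde a)$ be the solution of NRYM with this initial datum, written in the Kaluza--Klein ansatz as a metric $h(t)$ on $P$. Since $\phi_0=\mathrm{id}$, the pulled-back family $\bar h(t)=\phi_t^\ast h(t)$ solves NGRYM with the same initial datum, and by hypothesis --- which is supplied by Theorems \ref{center manifold ngrym} and \ref{asymp stability of ngrym}, and which in fact provides exponential decay in the $\mathbb X_1=\mathfrak h^{2+\epsilon}$ norm --- the pair $(\bar u,\bar a)(t)$ converges exponentially fast to an Einstein Yang--Mills metric $(\bar u_\infty,\bar a_\infty)$.

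The first step is to estimate $W(t)$. Its components involve only $\bar u,\bar a$ and their first spatial derivatives, and $W$ vanishes identically at an Einstein Yang--Mills metric written in the Coulomb gauge: for $g_{ij}=e^{\bar u}h_{ij}$ on a surface one has $W^k=0$, as noted in the proof of Lemma \ref{transform}, and $W^3=-d^\star\bar a=0$ is exactly the Coulomb condition. Because $\mathfrak h^{2+\epsilon}$ dominates the $C^1$ norm and $(\bar u,\bar a)(t)\to(\bar u_\infty,\bar a_\infty)$ exponentially in $\mathfrak h^{2+\epsilon}$, the vector field obeys $\sup_P|W(\cdot,t)|_{h(t)}\le Ce^{-\omega t}$ for some $C,\omega>0$; here one uses that the metrics $h(t)$ remain uniformly equivalent to $h_\infty$, so that the choice of metric in which $|W|$ is measured is immaterial.

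The second step is to apply the lemma of \cite{GIK} (Lemma 3.5 there, quoted above), with the base manifold replaced by the total space $P$ and with the evolving family $h(t)$: the bound on $|W|$ yields exponential convergence $\phi_t\to\phi_\infty$ to a fixed diffeomorphism of $P$, and, running the same argument for the inverse flow, $\phi_t^{-1}\to\phi_\infty^{-1}$ exponentially. Writing $h(t)=(\phi_t^{-1})^\ast\bar h(t)$ and using that pullback by a convergent family of diffeomorphisms is a locally Lipschitz operation on the little H\"older spaces near $(\phi_\infty^{-1},\bar h_\infty)$, I conclude that $h(t)$ converges exponentially to $h_\infty:=(\phi_\infty^{-1})^\ast\bar h_\infty$; decomposing $h_\infty$ in the Kaluza--Klein form produces the limit $(\tilde u_\infty,\tilde a_\infty)$. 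Since the Einstein condition on the base and the Yang--Mills condition $d^\star F=0$ are invariant under automorphisms of $P$ covering diffeomorphisms of $M$, the limit $(\tilde u_\infty,\tilde a_\infty)$ is again an Einstein Yang--Mills metric. It also follows that the NRYM solution exists for all $t\ge0$, since it differs from the globally defined NGRYM solution only by the family $\phi_t$, which is defined for all time.

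I expect the main obstacle to be the quantitative bookkeeping needed to keep the exponential rate through the composition: one must verify that the map sending a pair (convergent family of diffeomorphisms, convergent family of metrics) to the pulled-back metric is Lipschitz in the relevant little H\"older norms on a neighborhood of $(\phi_\infty^{-1},\bar h_\infty)$, so that products of $O(e^{-\omega t})$ quantities remain $O(e^{-\omega' t})$. A secondary technical point is to confirm that the term $W^3=-d^\star\bar a$ really is realized as the vertical component of a genuine flow on $P$, so that a single application of the lemma of \cite{GIK} on $P$ simultaneously controls the reparametrization of the base and the change of gauge, rather than forcing a separate treatment of a bundle gauge transformation.
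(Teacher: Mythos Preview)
Your proposal is correct and follows essentially the same route as the paper: use the exponential convergence of the NGRYM solution to show $W$ (here reducing to $W^3=-d^\star\bar a$, since $W^k\equiv 0$ for conformal metrics on a surface) decays exponentially, then invoke the lemma of \cite{GIK} to obtain convergence of the diffeomorphisms $\phi_t$ and hence of the NRYM solution. Your write-up is in fact more explicit than the paper's about the final pullback step $h(t)=(\phi_t^{-1})^\ast\bar h(t)$ and the Lipschitz bookkeeping needed to propagate the exponential rate, points the paper leaves implicit.
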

\begin{proof}
Since $F$ is invariant under gauge transformation, it is clear that $\tilde{a}_\infty$ is Yang-Mills.  So we need to show that $\tilde{a}$ converges to a limit.  We have that $\bar{a}\to \bar{a}_\infty$ exponentially fast, so in particular, $d^\star \bar{a}\to 0$ exponentially fast.  Thus our vector field $W$ from Lemma \ref{transform} converges to 0 exponentially fast.  Our result follows from the previous lemma. 
\end{proof}

\end{document}